\newtheorem{theorem}{Theorem}[section]
\newtheorem{corollary}[theorem]{Corollary}
\newtheorem{lemma}[theorem]{Lemma}
\newtheorem{proposition}[theorem]{Proposition}
\theoremstyle{definition}
\newtheorem{definition}[theorem]{Definition}
\newtheorem{remark}[theorem]{Remark}
\theoremstyle{remark}
\renewcommand{\theclaim}{\textup{\theclaim}}
\numberwithin{equation}{section}
\def\openone
\newbox\ipbox
\newcommand{\diracb}[1]{\left\langle #1\mathrel{\mathchoice

{\setbox\ipbox=\hbox{$\displaystyle \left\langle\mathstrut
#1\right.$}

\vrule height\ht\ipbox width0.25pt depth\dp\ipbox}

{\setbox\ipbox=\hbox{$\textstyle \left\langle\mathstrut
#1\right.$}

\vrule height\ht\ipbox width0.25pt depth\dp\ipbox}

{\setbox\ipbox=\hbox{$\scriptstyle \left\langle\mathstrut
#1\right.$}

\vrule height\ht\ipbox width0.25pt depth\dp\ipbox}

{\setbox\ipbox=\hbox{$\scriptscriptstyle \left\langle\mathstrut
#1\right.$}

\vrule height\ht\ipbox width0.25pt depth\dp\ipbox}

}\right. }
\newcommand{\dirack}[1]{\left. \mathrel{\mathchoice

{\setbox\ipbox=\hbox{$\displaystyle \left.\mathstrut
#1\right\rangle$}

\vrule height\ht\ipbox width0.25pt depth\dp\ipbox}

{\setbox\ipbox=\hbox{$\textstyle \left.\mathstrut
#1\right\rangle$}

\vrule height\ht\ipbox width0.25pt depth\dp\ipbox}

{\setbox\ipbox=\hbox{$\scriptstyle \left.\mathstrut
#1\right\rangle$}

\vrule height\ht\ipbox width0.25pt depth\dp\ipbox}

{\setbox\ipbox=\hbox{$\scriptscriptstyle \left.\mathstrut
#1\right\rangle$}

\vrule height\ht\ipbox width0.25pt depth\dp\ipbox}

} #1\right\rangle}
\newcommand{\diam}{\operatorname*{diam}}
\newcommand{\cj}[1]{\overline{#1}}
\newcommand{\bz}{\mathbb{Z}}
\newcommand{\B}{\mathcal{B}}
\newcommand{\br}{\mathbb{R}}
\newcommand{\bn}{\mathbb{N}}
\def\blfootnote{\xdef\@thefnmark{}\@footnotetext}
\newcommand{\supp}[1]{\text{supp} (#1)}
\def\F{\mathcal{F}}
\def\E{\mathcal E}
\def\-{^{-1}}
\def\B{\mathcal{B}}
\def\D{\mathcal{D}}
\begin{document}

\title[Continuous and Discrete Fourier Frames for Fractal Measures]{Continuous and Discrete Fourier Frames for Fractal Measures}
\author{Dorin Ervin Dutkay}
\blfootnote{This research is supported in part by  the National
Science Foundation grant 1106934}
\address{[Dorin Ervin Dutkay] University of Central Florida\\
    Department of Mathematics\\
    4000 Central Florida Blvd.\\
    P.O. Box 161364\\
    Orlando, FL 32816-1364\\
U.S.A.\\} \email{Dorin.Dutkay@ucf.edu}

\author{Deguang Han}
\address{[Deguang Han]University of Central Florida\\
    Department of Mathematics\\
    4000 Central Florida Blvd.\\
    P.O. Box 161364\\
    Orlando, FL 32816-1364\\
U.S.A.\\} \email{deguang.han@ucf.edu}

\author{Eric Weber}
\address{[Eric Weber]Department of Mathematics\\
396 Carver Hall\\
Iowa State University\\
Ames, IA 50011\\
U.S.A.\\} \email{esweber@iastate.edu}

\thanks{}
\subjclass[2000]{28A80,28A78, 42B05} \keywords{Plancherel theorem, frame, Bessel,
Fourier series, Hilbert space, fractal, selfsimilar, iterated
function system}

\begin{abstract}
Motivated by the existence problem of Fourier frames on fractal
measures, we introduce Bessel and frame measures for a given
finite measure on $\br^d$, as extensions of the notions of Bessel
and frame spectra that correspond to bases of exponential
functions. Not every finite compactly supported Borel measure
admits frame measures. We present a general way of constructing Bessel/frame measures for a given measure. The idea is that if a convolution of two measures admits a Bessel measure then one can use the Fourier transform of one of the measures in the convolution as a weight for the Bessel measure to obtain a Bessel measure for the other measure in the convolution. The same is true for frame measures, but with certain restrictions.
We investigate some general properties of frame measures and their Beurling dimensions. In particular we show that the
 Beurling dimension is invariant under convolution (with a
 probability measure) and under a certain type of discretization.
 Moreover, if a measure admits a frame measure then it admits an atomic one, and hence a weighted Fourier frame. We
also construct some examples of frame measures for self-similar
measures.
\end{abstract}
\maketitle \tableofcontents

\section{Introduction}

A fundamental result of Fourier analysis is the Plancherel
theorem:  for Lebesgue measure $\lambda$ on $\br$,
\begin{equation} \label{E:Plancherel}
\int_{\br} | \hat{f}(t)|^2 d \lambda (t) = \int_{\br}| f(x) |^2 d
\lambda (x).
\end{equation}
This equality suggests the idea of $\lambda$ as a dual measure to
itself, in that the norm of the Fourier transform of $f$ is equal
to the norm of $f$.  Similarly, for the measure $m = \lambda
|_{[0,1]}$, $\lambda$ is a dual measure in the same sense, since
if $f$ is supported on $[0,1]$, then equation (\ref{E:Plancherel})
holds.  Yet there is another measure that satisfies this norm
equivalence, namely the measure $\nu = \sum_{n \in \bz}
\delta_{n}$ since
\[ \int_{\br} | \hat{f}(t) |^2 d \nu(t) = \sum_{n \in \bz} | \int_{\br} f(x) e^{-2 \pi i n x} dm(x) |^2 \]
which equals $ \int_{\br} |f(x)|^2 d m (x) $ by the virtue that the
integer exponentials form an orthonormal basis for $L^2[0,1]$.
Moreover, for any sequence $\{ e^{2 \pi i \gamma_{n} x } : n \in
\bz \}$ which forms a Fourier frame \cite{DS52a,OSANN} for
the Paley-Weiner space, $\nu = \sum_{n \in \bz}
\delta_{\gamma_{n}}$ will be a dual measure in the slightly more
general sense that $\|f\|_{L^2(m)} \simeq \|\hat{f}\|_{L^2(\nu)}$.
In this paper, we consider questions of existence and structure of
dual measures for singular measures, in particular measures which
are invariant under iterated function systems.

In general, we consider a Borel measure $\nu$ on $\br^d$ to
be dual to $\mu$ if for every $f \in L^2(\mu)$,
\begin{equation} \label{E:equiv}
 \int_{\br^d} | \widehat{f\,d\mu} (x) |^2 d \nu(x) \simeq \int_{\br^d} | f(t) |^2 d \mu(t),
\end{equation}
where the Fourier transform is given by
$$\widehat{f\,d\mu}(t)=\int f(t)e^{-2\pi i t\cdot x}\,d\mu(x),\quad(t\in\br^d).$$
for a function $f\in L^1(\mu)$.  As we shall see, this is a generalization of the idea of a
\emph{Fourier frame}.

This idea of dual measures is closely related to other concepts.
Jorgensen and Pedersen consider \emph{spectral pairs} in
\cite{MR1700084}.  These spectral pairs consist of two measures
$\mu$ and $\nu$ on $\br^d$ such that the equivalence in Equation
(\ref{E:equiv}) is an equality
together with the requirement that the Fourier transform from
$L^2(\mu)$ to $L^2(\nu)$ is onto.  They consider specifically
compactly supported $\mu$ and purely atomic $\nu$.  Similarly,
problems of equivalent norms in Paley-Wiener spaces are considered
in \cite{OSANN,LS02a}.

In a slightly different view, the series of papers by Strichartz
\cite{MR1078738,MR1081941,MR1237052} considers a type of
Plancherel duality between self-similar measures and a limit of
localized Lebesgue measures.  As a typical result, Strichartz
proves that for a suitable fractal measure $\mu$ on $\br^d$ which
is ``locally $\alpha$-dimensional''
\[ \int_{\br^d} |f(x)|^2 d \mu \simeq \limsup_{R \to \infty} \frac1{R^{d - \alpha}} \int_{B_{0}(R)} | \widehat{f\,d\mu}(t)|^2 d \lambda. \]

Extensions of these results are contained in \cite{MZ09a}.

Finally, \cite{GH03a} introduce a concept of continuous frame,
that is a frame which is not a sequence of vectors in a Hilbert
space but instead a set of vectors parametrized by a measure
space.  The dual measure considered in the present paper is a
concrete case of Gabardo and Han's definition, where here the
vectors are specifically exponential functions.

In recent years there has been a wide range of interests in
expanding the classical Fourier analysis to fractal or more
general probability measures \cite{DHS09,HL08,
JP98,DHS09,HL08,MR1744572,JP98,MR2338387,MR2200934,MR2297038,MR1785282,MR2279556,MR2443273}.
One of the central themes of this area of research involves
constructive and computational bases in $L^2(\mu)$, where $\mu$ is
a measure which is determined by some self-similarity property.
These include classical Fourier bases, as well as wavelet and
frame constructions.

We are motivated by questions of Fourier frames for fractal
measures \cite{DHSW10,DHW11a}.  Specifically, we are motivated by
the question of whether the Cantor measure has a Fourier frame.
For the  middle-third Cantor set, there is a canonical measure
$\mu_{3}$ which is supported on the Cantor set--it is known that
there is no orthonormal basis of exponentials for $L^2(\mu_{3})$
\cite{JP98}.  It was shown in \cite{DHW11a} that there exists a
sequence of exponentials which has (relatively) large
Beurling dimension \cite{CKS08} and forms a Bessel sequence.  It
is still unknown if there is a Fourier frame for $\mu_{3}$.

In contrast, a Cantor like set with an associated measure
$\mu_{4}$ constructed in \cite{JP98} does possess an orthonormal
basis of exponentials.  It was shown in \cite{DHW11a} that any
Fourier frame for $\mu_{4}$ must have the property that the
sequence of frequencies must have Beurling dimension at most
$\dfrac{1}{2}$.  However, we show in Corollary \ref{cor3.5}, that
the integer lattice can be weighted so that a sequence of weighted
exponentials forms a frame for $\mu_{4}$, but the sequence of
integers which have a non-zero weight has Beurling dimension 1. In
the context of dual measures, there exists a sequence of weights
$\{d_n\}$ such that $\nu = \sum_{n \in \bz} d_n \delta_{n}$ is a
dual measure to $\mu_{4}$. We will introduce an appropriate notion
of Beurling dimension for measures and this measure $\nu$ will
have dimension $\dfrac{1}{2}$ as predicted by Theorem \ref{th2.6}.

The rest of this paper is organized as follows: we introduce basic definitions next. In section 2 we
investigate some general properties about Bessel and frame
measures. We demonstrate in  Theorem \ref{th1.1.1} that  there
cannot be any general statement concerning the existence of frame
measures. Moreover, we cannot even say that frame measures exist
for invariant measures of iterated function systems, which is our
motivating example.  However, by using convolutions of measures,
we show that if one frame measure exists, then in general many
frame measures exist (Proposition \ref{pr1.2}).  Moreover, if one
frame measure exists, then there exists a frame measure which is
absolutely continuous with respect to Lebesgue measure and has
smooth Radon-Nikodym derivative (Corollary \ref{cor1.3}).  At the
other extreme, again if one frame measure exists, then there
exists a frame measure which is atomic (Theorem \ref{th1.10}).  As
a consequence, if a measure, such as $\mu_{3}$ has a frame
measure, then it has a weighted Fourier frame (Remark
\ref{rm1.1}). Section 3 is devoted to establishing the connections
between frame/Bessel measures and  Beurling dimension and density.
We prove that the Beurling dimension is invariant under
convolutions with probability measures, and under discretizations
(Theorem \ref{th2.4} and Theorem \ref{th2.5}). Moreover, we obtain
that any Bessel measure with positive lower Beurling density is
absolutely continuous with respect to the Lebesque measure, and
its Radon-Nikodym derivative in $L^{2}$-integrable. The last
section of this paper is focused on frame measures for self-affine
measures. In Theorem \ref{th3.4} we prove that under certain tiling assumptions one can use the Fourier transform of the complementing measure as a weight for the Lebesgue measure and obtain a Plancherel measure for the given fractal measure.


\begin{definition}
Denote by $\delta_a$ the Dirac measure at the point $a$.  Denote by $e_t$, $t\in\br^d$, the exponential function
$$e_t(x)=e^{2\pi i t\cdot x},\quad(x\in\br^d).$$
\end{definition}

\begin{definition}
A sequence $\{x_n\}_{n=1}^{\infty}$ in a Hilbert space (with inner
product $\langle \cdot , \cdot \rangle $) is \emph{Bessel} if
there exists a positive constant $B$ such that
\[ \sum_{n=1}^{\infty} | \langle v , x_n \rangle |^2 \leq B \|v\|^2 \mbox{ for all }v.\]

The sequence is a frame if in addition to being a Bessel sequence
there exists a positive constant $A$ such that
\[ A \| v\|^2 \leq \sum_{n=1}^{\infty} | \langle v , x_n \rangle |^2 \leq B \|v\|^2. \]
In this case, $A$ and $B$ are called the lower and upper frame
bounds, respectively.

\end{definition}

We extend these ideas as follows.
\begin{definition}
We say that a Borel measure $\nu$ is a {\it Bessel measure} for $\mu$ if there exists a constant $B>0$ such that for every $f \in L^2(\mu)$, we have
\[ \|\widehat{f\,d\mu} \|_{L^2(\nu)}^{2}\leq B \| f\|_{L^2(\mu)}^{2} . \]
We call $B$ a {\it (Bessel) bound} for $\nu$. We say the measure
$\nu$ is a {\it frame measure} for $\mu$ if there exists constants
$A,B > 0$ such that for every $f \in L^2(\mu)$, we have
\[ A\| f\|_{L^2(\mu)}^{2}\leq \|\widehat{f\,d\mu} \|_{L^2(\nu)}^{2} \leq  B \| f\|_{L^2(\mu)}^{2}. \]
We call $A,B$ {\it (frame) bounds} for $\nu$. We call $\nu$ a {\it
Plancherel measure} if $A=B=1$. 
We say that a set $\Lambda$ in $\br^d$ is a {\it spectrum} for $\mu$ if the set $E(\Lambda)=\{e_\lambda : \lambda\in\Lambda\}$ is an orthonormal basis for $L^2(\mu)$.
\end{definition}

\begin{remark} \label{rm1.1}
Note that, as mentioned previously, if $\{ e_{\lambda_{n}}: n \in
\bz \} \in L^2(\mu)$ is a frame, then the measure $\nu = \sum_{n
\in \bz} \delta_{\lambda_{n}}$ is a frame measure.  Conversely,
if $\nu$ is purely atomic, i.e. $\nu = \sum_{n \in \bz} d_{n} \delta_{\lambda_{n}}$,
and is a frame measure for $\mu$, then $\{ \sqrt{d_{n}} e_{\lambda_{n}} \}$ is a
(weighted) Fourier frame for $L^2(\mu)$.  Indeed, we have
\begin{align*}
A \| f \|^{2}_{L^2(\mu)} \leq \|\widehat{f\,d\mu} \|_{L^2(\nu)}^{2} &= \sum_{n \in \bz} d_n |\widehat{f\,d\mu}(\lambda_{n}) |^2 \\
&= \sum_{n \in \bz} | \langle f, \sqrt{d_n} e_{\lambda_{n}} \rangle |^2 \leq B \| f\|_{L^2(\mu)}^{2}.
\end{align*}
Here we require that $\mu$ be a finite Borel measure in order for $e_{\lambda_{n}} \in L^2(\mu)$.  For the remainder of the paper, we shall assume that $\mu$ is a Borel probability measure, unless stated explicitly otherwise.
\end{remark}

\section{Qualitative Structure Results}

In this section we prove some general results concerning Bessel and frame measures $\nu$ for a given measure $\mu$. We begin with a proof that any Bessel measure $\nu$ must be {\it
locally finite}, i.e. $\nu(K)<\infty$ for all compact subsets $K$
of $\br^d$, and hence is $\sigma$-finite.  This will allow us the
use of the Fubini theorem.

\begin{proposition}\label{pr1.1}
If $\nu$ is a Bessel measure for the measure $\mu$ then there
exists a constant $C$ such that $\nu(K)\leq C\max\{1,\diam(K)^d\}$
for any compact subset $K$ of  $\br^d$.  Consequently, $\nu$ is
$\sigma$-finite.
\end{proposition}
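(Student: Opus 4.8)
The plan is to feed cleverly chosen test functions into the Bessel inequality. Since $\mu$ is a probability measure, every exponential $e_s$ lies in $L^2(\mu)$ with $\|e_s\|_{L^2(\mu)}^2=1$, and a direct computation gives
\[ \widehat{e_s\,d\mu}(t)=\int e^{2\pi i s\cdot x}e^{-2\pi i t\cdot x}\,d\mu(x)=\widehat{\mu}(t-s), \]
where $\widehat{\mu}(u)=\int e^{-2\pi i u\cdot x}\,d\mu(x)$. Applying the Bessel bound to $f=e_s$ therefore yields, for every $s\in\br^d$,
\[ \int_{\br^d}|\widehat{\mu}(t-s)|^2\,d\nu(t)=\|\widehat{e_s\,d\mu}\|_{L^2(\nu)}^2\leq B. \]
The point of the modulation parameter $s$ is that it lets me slide the ``bump'' $|\widehat{\mu}(\cdot-s)|^2$ so that it is centered anywhere in $\br^d$, rather than only near the origin as the constant test function $f=1$ would give.

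Next I exploit that $\widehat{\mu}$ is continuous with $\widehat{\mu}(0)=\mu(\br^d)=1$. Hence there is a radius $\delta>0$, depending only on $\mu$, such that $|\widehat{\mu}(u)|\geq\tfrac12$, and so $|\widehat{\mu}(u)|^2\geq\tfrac14$, for all $u$ in the closed ball $\overline{B}(0,\delta)$. Fixing any center $s_0$ and taking $K'=\overline{B}(s_0,\delta)$, every $t\in K'$ satisfies $t-s_0\in\overline{B}(0,\delta)$, so
\[ \tfrac14\,\nu\big(\overline{B}(s_0,\delta)\big)\leq\int_{\overline{B}(s_0,\delta)}|\widehat{\mu}(t-s_0)|^2\,d\nu(t)\leq\int_{\br^d}|\widehat{\mu}(t-s_0)|^2\,d\nu(t)\leq B. \]
This produces the uniform local estimate $\nu(\overline{B}(s_0,\delta))\leq 4B$ for every center $s_0$, and crucially it requires neither Fubini nor any prior $\sigma$-finiteness of $\nu$, thereby sidestepping the apparent circularity in establishing local finiteness.

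Finally I pass from balls of the fixed radius $\delta$ to an arbitrary compact $K$ by a covering argument. Writing $D=\diam(K)$, the set $K$ sits inside $\overline{B}(x_0,D)$ for any $x_0\in K$, and a ball of radius $D$ can be covered by at most $C_d\max\{1,(D/\delta)^d\}$ balls of radius $\delta$, where $C_d$ depends only on the dimension. Summing the uniform estimate over such a cover gives
\[ \nu(K)\leq 4B\,C_d\max\{1,(D/\delta)^d\}\leq C\max\{1,\diam(K)^d\},\qquad C=4B\,C_d\max\{1,\delta^{-d}\}. \]
Taking $K=\overline{B}(0,n)$ and letting $n\to\infty$ then exhausts $\br^d$ by sets of finite $\nu$-measure, so $\nu$ is $\sigma$-finite.

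The argument is essentially obstruction-free once the modulation trick is in place; the only points demanding care are the continuity input (that $|\widehat{\mu}|\geq\tfrac12$ on a \emph{fixed} neighborhood of $0$, which uses only continuity at the single point $0$ together with $\mu$ being a probability measure) and bookkeeping the covering number so that the exponent of $\diam(K)$ comes out exactly as $d$. I expect the covering count to be the one routine computation worth pinning down carefully.
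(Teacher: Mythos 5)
Your proof is correct and follows essentially the same route as the paper's: test the Bessel inequality on the modulated exponentials $e_s$, use continuity of $\widehat{\mu}$ at $0$ together with $\widehat{\mu}(0)=\mu(\br^d)>0$ to get a uniform bound $\nu(B(s,\delta))\leq B/\delta'$ on balls of a fixed radius, and then cover a compact $K$ by on the order of $\max\{1,(\diam(K)/\delta)^d\}$ such balls. The only differences are cosmetic (explicit constants $\tfrac12$, $\tfrac14$ and a slightly more detailed covering count).
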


\begin{proof}
Since $\widehat{d\mu}(0)=\mu(\br^d)>0$ there exits $\epsilon>0$
and $\delta>0$ such that $|\widehat{d\mu}(x)|^2\geq \delta$ for
$x\in B(0,\epsilon)$. Then for any $t\in \br^d$ we have
$$B=B\|e_t\|_{L^2(\mu)}^2\geq \int |\widehat{e_t\,d\mu}(x)|^2\,d\nu(x)=\int|\widehat{d\mu}(x-t)|^2\,d\nu(x)\geq \int_{B(t,\epsilon)}|\widehat{d\mu}(x-t)|^2\,d\nu(x)\geq \nu(B(t,\epsilon))\delta.$$
Therefore $\nu(B(t,\epsilon)) \leq B/\delta$. This implies that if a
compact set has diameter less than $\epsilon$, then its measure is
bounded by $B/\delta$. Since any compact set $K$ can be covered by
some universal constant times $\diam(K)^d$ such balls, the result
follows.

\end{proof}

\begin{theorem}\label{th1.1.1}
There exist finite compactly supported Borel measures that do not
admit frame measures.
\end{theorem}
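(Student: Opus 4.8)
The plan is to exhibit a single explicit measure and show directly that no pair of frame bounds can hold for it. I would take $\mu = \lambda|_{[0,1]} + \delta_2$ on $\br$, where $\lambda$ is Lebesgue measure; this is finite (total mass $2$) and compactly supported (on $[0,1]\cup\{2\}$). The guiding idea is that the atom and the continuous part impose contradictory requirements on any candidate frame measure $\nu$: the atom forces $\nu$ to be a \emph{finite} measure, while the continuous part forces $\nu$ to have \emph{infinite} total mass. Suppose toward a contradiction that $\nu$ is a frame measure for $\mu$ with bounds $A,B>0$.

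First I would exploit the atom. Since $[0,1]$ and $\{2\}$ are disjoint, every $f\in L^2(\mu)$ is described by its restriction to $[0,1]$ together with its value $f(2)$, and $\|f\|_{L^2(\mu)}^2 = \int_0^1|f|^2\,d\lambda + |f(2)|^2$. Testing the upper (Bessel) inequality on $f=\mathbf{1}_{\{2\}}$ gives $\widehat{f\,d\mu}(t)=e^{-4\pi i t}$, whose modulus is identically $1$; hence
\[ \nu(\br) = \int_{\br}|\widehat{f\,d\mu}(t)|^2\,d\nu(t) \le B\,\|f\|_{L^2(\mu)}^2 = B, \]
so $\nu$ must be a finite measure. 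The point is that the exponential coming from an atom does not decay, so it is Bessel-integrable only against a finite measure.

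Next I would show that a finite $\nu$ cannot supply the lower bound, using high-frequency functions on the continuous part. For $g_n = e_n\,\mathbf{1}_{[0,1]}$ one has $\|g_n\|_{L^2(\mu)}=1$ and $g_n(2)=0$, so $\widehat{g_n\,d\mu}(t) = \int_0^1 e^{-2\pi i (t-n)x}\,dx$ and $|\widehat{g_n\,d\mu}(t)|^2 = \operatorname{sinc}^2(t-n)$, where $\operatorname{sinc}(s)=\sin(\pi s)/(\pi s)$. I would then estimate $\int \operatorname{sinc}^2(t-n)\,d\nu(t)$ by splitting the line at $|t-n|=M$: on the far region the integrand is at most $(\pi M)^{-2}$, contributing at most $B/(\pi^2 M^2)$ since $\nu(\br)\le B$; on the near region $[n-M,n+M]$ the integrand is at most $1$, contributing at most $\nu([n-M,n+M])$, which tends to $0$ as $n\to\infty$ because $\nu$ is finite and $\nu([n-M,n+M])\le\nu([n-M,\infty))\to 0$. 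Letting $n\to\infty$ and then $M\to\infty$ yields $\int|\widehat{g_n\,d\mu}|^2\,d\nu\to 0$, contradicting $A\le A\|g_n\|_{L^2(\mu)}^2 \le \int|\widehat{g_n\,d\mu}|^2\,d\nu$.

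The main obstacle is the interplay in the third step: one must ensure the chosen test functions genuinely defeat \emph{every} finite $\nu$ at once. This is exactly why the $\operatorname{sinc}^2$ tail estimate (the $1/(t-n)^2$ decay integrated against a globally finite $\nu$) combined with the escape of mass $\nu([n-M,n+M])\to 0$ is required. By contrast, the second step forcing $\nu(\br)<\infty$ is immediate once one tests on the atom, and it is precisely the presence of that atom — rather than the continuous part alone, which admits $\lambda$ and $\sum_{n\in\bz}\delta_n$ as frame measures — that produces the contradiction.
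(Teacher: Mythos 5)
Your proposal is correct and follows essentially the same route as the paper: the same measure $\lambda|_{[0,1]}+\delta_2$, the same use of the atom indicator to force $\nu(\br)\le B$, and the same high-frequency modulated indicators of $[0,1]$ to defeat the lower bound. The only cosmetic difference is in organizing the final estimate (you split at distance $M$ from the frequency $n$ and use escape of $\nu$-mass to infinity, while the paper fixes a ball $B(0,R)$ carrying most of $\nu$ and uses the decay of the sinc there); both rest on the identical combination of finiteness of $\nu$ and the $O(1/t^2)$ decay of $|\widehat{\chi_{[0,1]}}|^2$.
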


\begin{proof}
Let $\mu=\chi_{[0,1]}\,dx+\delta_2$. Suppose $\nu$ is a frame
measure for $\mu$ with frame bounds $A,B>0$. Let
$f:=\chi_{\{2\}}$. Then $|\widehat{f\,d\mu}(t)|=1$ and
$\|f\|_{L^2(\mu)}=1$. The Bessel bound implies that $\nu(\br)\leq
B<\infty$. From this we obtain that for any $\epsilon>0$ there
exists $R>0$ such that $\nu(\br\setminus B_0(R))<\epsilon$.

Now take some $T$ large, arbitrary and let $g(x):=e^{-2\pi i
Tx}\chi_{[0,1]}$. We have
$$|\widehat{g\,d\mu}(t)|^2=\frac{\sin^2(\pi(T+t))}{\pi^2(T+t)^2}\quad(t\in\br).$$
Therefore $|\widehat{g\,d\mu}(t)|^2\leq 1$ for all $t\in\br$ and
taking $T\geq 2R$, we have
$$|\widehat{g\,d\mu}(t)|^2\leq \frac{1}{\pi^2(T-R)^2}\mbox{ for all }t\in(-R,R).$$
Then, using the lower frame bound
$$A=A\|g\|_{L^2(\mu)}^2\leq \int |\widehat{g\,d\mu}(t)|^2\,d\nu(t)=\int_{B(0,R)}|\widehat{g\,d\mu}(t)|^2\,d\nu(t)+\int_{\br\setminus B(0,R)}|\widehat{g\,d\mu}(t)|^2\,d\nu\leq \frac{1}{\pi^2(T-R)^2}\cdot \nu(\br)+\epsilon.$$
Letting $T\rightarrow\infty$ and $\epsilon\rightarrow0$ we obtain
that $A=0$, a contradiction.
\end{proof}

The next proposition shows that once a Bessel/frame measure is present, many others can be constructed.

\begin{proposition}\label{pr1.2}
For fixed positive constants $A$ and $B$, let $\B_B(\mu)$ denote the set of all Bessel measures with bound $B$ and let $\F_{A,B}(\mu)$ denote the set of all frame measures with bounds $A,B$.  Both sets, while possibly empty, are convex and closed under convolution with Borel probability measures.
\end{proposition}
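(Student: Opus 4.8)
The plan is to verify the two closure properties for each set separately, handling the Bessel and frame cases in parallel since they differ only by the presence of the lower bound. For convexity, the key point is that for a fixed $f\in L^2(\mu)$ the assignment $\nu\mapsto \|\widehat{f\,d\mu}\|_{L^2(\nu)}^2=\int|\widehat{f\,d\mu}|^2\,d\nu$ is linear in $\nu$. Hence if $\nu_1,\nu_2$ both satisfy the upper bound $B$ (respectively both frame bounds $A,B$) and $t\in[0,1]$, then
\[
\int|\widehat{f\,d\mu}|^2\,d(t\nu_1+(1-t)\nu_2)=t\int|\widehat{f\,d\mu}|^2\,d\nu_1+(1-t)\int|\widehat{f\,d\mu}|^2\,d\nu_2
\]
is a convex combination of quantities each bounded above by $B\|f\|_{L^2(\mu)}^2$ (respectively each trapped between $A\|f\|_{L^2(\mu)}^2$ and $B\|f\|_{L^2(\mu)}^2$), so the same bounds survive the combination. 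This settles convexity immediately.

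For closure under convolution, fix $\nu$ in the relevant set and a Borel probability measure $\rho$. Using the definition of convolution of measures, together with the nonnegativity of $|\widehat{f\,d\mu}|^2$, the plan is to expand
\[
\|\widehat{f\,d\mu}\|_{L^2(\nu*\rho)}^2=\int\int|\widehat{f\,d\mu}(x+y)|^2\,d\nu(x)\,d\rho(y).
\]
The crucial observation is that a shift in the frequency variable is a modulation of $f$: directly from the definition of the transform one has $\widehat{f\,d\mu}(x+y)=\widehat{(f\,e_{-y})\,d\mu}(x)$, and since $|e_{-y}|\equiv 1$ this modulation is an isometry of $L^2(\mu)$, that is $\|f\,e_{-y}\|_{L^2(\mu)}=\|f\|_{L^2(\mu)}$. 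Applying the Bessel (respectively frame) inequality for $\nu$ to the inner integral, with $f$ replaced by $f\,e_{-y}$, bounds it above by $B\|f\|_{L^2(\mu)}^2$ (respectively between $A\|f\|_{L^2(\mu)}^2$ and $B\|f\|_{L^2(\mu)}^2$), uniformly in $y$. Integrating this constant bound against $\rho$ contributes only the factor $\rho(\br^d)=1$, yielding exactly the same bounds for $\nu*\rho$; thus $\nu*\rho$ lies in the same set.

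I do not expect a genuine obstacle here. The only point requiring care is the interchange of integrals, which is legitimate because $\nu$ is $\sigma$-finite by Proposition \ref{pr1.1}, $\rho$ is finite, and the integrand is nonnegative, so Tonelli's theorem applies with no integrability hypothesis. The conceptual heart of the argument is the modulation identity $\widehat{f\,d\mu}(x+y)=\widehat{(f\,e_{-y})\,d\mu}(x)$, which converts the frequency translation generated by the convolution into an $L^2(\mu)$-isometry and thereby leaves the frame constants $A$ and $B$ unchanged. It is precisely this exact preservation of the constants, rather than merely of some pair of bounds, that is needed for the statement as phrased.
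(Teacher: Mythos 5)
Your proof is correct and follows essentially the same route as the paper: convexity via linearity of $\nu\mapsto\int|\widehat{f\,d\mu}|^2\,d\nu$, and closure under convolution via the modulation identity $\widehat{f\,d\mu}(x+y)=\widehat{(e_{-y}f)\,d\mu}(x)$ combined with the fact that modulation is an isometry of $L^2(\mu)$ and $\rho$ has total mass one. Your explicit appeal to Tonelli's theorem to justify the interchange of integrals is a detail the paper leaves implicit, but the argument is the same.
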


\begin{proof}
To see that these sets are convex requires just a direct
computation. We check that $\B_B(\mu)$ is closed under convolution
with Borel probability measures, for the set of frame measures the
proof in analogous. Take $\nu\in \B_B(\mu)$ and let $\rho$ be a
Borel probability measure on $\br^d$. Take $f\in L^2(\mu)$. Then
$$\int |\widehat{f\,d\mu}(t)|^2\,d\nu*\rho(t)=\iint|\widehat{f\,d\mu}(t+s)|^2\,d\nu(t)\,d\rho(s)=\iint|\widehat{e_{-s}f\,d\mu}(t)|^2\,d\nu(t)\,d\rho(s)$$
$$\leq \int B\|e_{-s}f\|_{L^2(\mu)}^2\,d\rho(s)=B \int \|f\|_{L^2(\mu)}^2\,d\rho(s)=B\|f\|_{L^2(\mu)}^2.$$
\end{proof}

\begin{remark}
The set $\B_B(\mu)$ is never empty. One can take $\nu=\sum_{i\in I}c_i\delta_{\lambda_i}$ for some $\lambda_i\in\br^d$ and adjust the constants such that $\sum_{i\in I}c_i\leq B/\mu(\br^d)^2$. Then $\nu\in\B_B(\mu)$.
\end{remark}

\begin{corollary}\label{cor1.3}
If $\mu$ has a Bessel/frame measure $\nu$ then it has one which is
absolutely continuous with respect to the Lebesgue measure and
whose Radon-Nikodym derivative is $C^\infty$.
\end{corollary}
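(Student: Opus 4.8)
The plan is to exploit the stability established in Proposition \ref{pr1.2}: since $\B_B(\mu)$ and $\F_{A,B}(\mu)$ are closed under convolution with Borel probability measures, I would simply convolve the given measure $\nu$ with a measure whose density is as smooth as desired. Concretely, fix $\phi\in C^\infty(\br^d)$ with compact support, $\phi\ge 0$, and $\int_{\br^d}\phi\,dx=1$, and set $\rho=\phi\,dx$. Then $\rho$ is a Borel probability measure, so by Proposition \ref{pr1.2} the measure $\nu*\rho$ belongs to $\B_B(\mu)$ (respectively $\F_{A,B}(\mu)$) whenever $\nu$ does, and hence remains a Bessel (respectively frame) measure for $\mu$ with the same bounds. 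It then only remains to verify that $\nu*\rho$ is absolutely continuous with a $C^\infty$ Radon--Nikodym derivative.

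First I would identify that derivative explicitly. Unwinding the definition of the convolution of measures and applying Fubini, for any Borel set $E$ one has
$$(\nu*\rho)(E)=\iint \chi_E(x+y)\,\phi(y)\,dy\,d\nu(x)=\int_E\Big(\int_{\br^d}\phi(u-x)\,d\nu(x)\Big)\,du,$$
where the last equality uses the substitution $u=x+y$ in the inner integral. Thus $\nu*\rho$ is absolutely continuous with respect to Lebesgue measure, with density
$$g(u)=\int_{\br^d}\phi(u-x)\,d\nu(x).$$
The interchange of integrals and the finiteness of the inner integral are legitimate because $\nu$ is $\sigma$-finite, indeed locally finite, by Proposition \ref{pr1.1}: for each fixed $u$ the integrand $x\mapsto\phi(u-x)$ is supported in a compact set on which $\nu$ is finite.

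The only genuine work is to confirm $g\in C^\infty(\br^d)$, which I expect to be the main (if routine) technical point, handled by differentiation under the integral sign. Fix a bounded open set $U\subset\br^d$; for $u\in U$ the integrand $\phi(u-x)$ vanishes unless $x$ lies in the fixed compact set $K=\overline{U}-\supp{\phi}$, and $\nu(K)<\infty$ by Proposition \ref{pr1.1}. Since $\phi\in C^\infty$ has compact support, every partial derivative satisfies the uniform bound $|\partial^\alpha_u\phi(u-x)|\le \|\partial^\alpha\phi\|_\infty\,\chi_K(x)$ for $u\in U$, and the right-hand side is $\nu$-integrable. This dominating bound justifies differentiating under the integral to any order on $U$, yielding $\partial^\alpha g(u)=\int_{\br^d}\partial^\alpha_u\phi(u-x)\,d\nu(x)$; as $U$ is arbitrary, $g\in C^\infty(\br^d)$. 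Hence $\nu*\rho$ is the required Bessel/frame measure, absolutely continuous with $C^\infty$ Radon--Nikodym derivative.
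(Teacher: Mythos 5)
Your proposal is correct and takes essentially the same route as the paper: both invoke Proposition \ref{pr1.2} (closure under convolution with probability measures) and mollify with a compactly supported smooth probability density. The paper does this in two stages (first convolving with Lebesgue measure on $[0,1]$, then with a smooth kernel), whereas you do it in a single step and additionally supply the routine verification, via local finiteness from Proposition \ref{pr1.1} and differentiation under the integral sign, that the resulting density is $C^\infty$ --- a detail the paper leaves implicit.
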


\begin{proof}
Convoluting $\nu$ with the Lebesgue measure on $[0,1]$ one obtains
a Bessel/frame measure which is absolutely continuous; then,
convoluting with $g\,dt$ where $g\geq 0$ is a compactly supported
$C^\infty$-function with $\int g(t)\,dt=1$, one obtains the
desired measure.
\end{proof}

The next theorem shows that the Bessel/frame property is preserved under approximations that use convolution kernels.

\begin{theorem}\label{th1.4}
Let $\lambda_n$ be an approximate identity, in the sense that
$\lambda_n$ is a Borel probability measure with the property that
$\sup\{\|t\| : t\in\supp{\lambda_n}\}\rightarrow 0$ as
$n\rightarrow\infty$. Suppose $\nu$ is a $\sigma$-finite Borel
measure, and suppose $\nu*\lambda_n$ are Bessel/frame
measures for $\mu$ with uniform bounds, independent of $n$. Then
$\nu$ is a Bessel/frame measure.
\end{theorem}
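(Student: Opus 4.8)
The plan is to transfer the estimates from $\nu*\lambda_n$ to $\nu$ by fixing $f\in L^2(\mu)$ and analyzing the single nonnegative function $g:=|\widehat{f\,d\mu}|^2$. Since $\mu$ is a finite (probability) measure, $f\in L^1(\mu)$, so $\widehat{f\,d\mu}$ is bounded and uniformly continuous and hence $g$ is bounded and continuous. Using Tonelli's theorem (legitimate because $\nu$ is $\sigma$-finite, $\lambda_n$ is finite, and the integrand is nonnegative and jointly continuous), and the identity from the proof of Proposition \ref{pr1.2}, I would rewrite
$$\int g\,d(\nu*\lambda_n)=\iint g(t+s)\,d\lambda_n(s)\,d\nu(t)=\int g_n\,d\nu,\qquad g_n(t):=\int g(t+s)\,d\lambda_n(s).$$
Because $r_n:=\sup\{\|s\|:s\in\supp{\lambda_n}\}\to0$ and $g$ is continuous, $g_n\to g$ pointwise (indeed uniformly, by uniform continuity of $g$).

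For the Bessel statement this is essentially immediate. The uniform Bessel bound gives $\int g_n\,d\nu=\int g\,d(\nu*\lambda_n)\le B\|f\|_{L^2(\mu)}^2$ for every $n$. Since $g_n\ge0$ and $g_n\to g$ pointwise, Fatou's lemma yields $\int g\,d\nu\le\liminf_n\int g_n\,d\nu\le B\|f\|_{L^2(\mu)}^2$, so $\nu$ is a Bessel measure with the same bound $B$.

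The frame lower bound is the delicate point: the estimate $A\|f\|_{L^2(\mu)}^2\le\int g_n\,d\nu$ points in the opposite direction to Fatou, which only controls $\int g\,d\nu$ from above. To get around this I would first note, as in Proposition \ref{pr1.2}, that $\widehat{f\,d\mu}(t+s)=\widehat{(e_{-s}f)\,d\mu}(t)$, so that $\int g(t+s)\,d\nu(t)=\|\widehat{(e_{-s}f)\,d\mu}\|_{L^2(\nu)}^2$. Now I feed back the Bessel bound for $\nu$ just established: by the triangle inequality in $L^2(\nu)$,
$$\|\widehat{(e_{-s}f)\,d\mu}\|_{L^2(\nu)}\le\|\widehat{f\,d\mu}\|_{L^2(\nu)}+\|\widehat{(e_{-s}-1)f\,d\mu}\|_{L^2(\nu)}\le\|\widehat{f\,d\mu}\|_{L^2(\nu)}+\sqrt{B}\,\|(e_{-s}-1)f\|_{L^2(\mu)}.$$
Since $\|(e_{-s}-1)f\|_{L^2(\mu)}^2=\int|e^{-2\pi is\cdot x}-1|^2|f|^2\,d\mu$ is continuous in $s$ and vanishes at $s=0$ (dominated convergence, the integrand being bounded by $4|f|^2\in L^1(\mu)$), the quantity $\eta(r):=\sup_{\|s\|\le r}\|(e_{-s}-1)f\|_{L^2(\mu)}^2$ tends to $0$ as $r\to0$. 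Squaring, integrating against the probability measure $\lambda_n$, and using $\supp{\lambda_n}\subseteq\{\|s\|\le r_n\}$ gives
$$A\|f\|_{L^2(\mu)}^2\le\int g_n\,d\nu\le\left(\|\widehat{f\,d\mu}\|_{L^2(\nu)}+\sqrt{B\,\eta(r_n)}\right)^2.$$
Letting $n\to\infty$, so that $r_n\to0$ and $\eta(r_n)\to0$, produces $A\|f\|_{L^2(\mu)}^2\le\|\widehat{f\,d\mu}\|_{L^2(\nu)}^2$, the desired lower frame bound.

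The main obstacle is precisely this last step: convolution smooths $g$, and the naive limiting argument (Fatou) preserves only the upper, Bessel-type inequality, so one genuinely must use the already-proved Bessel bound for $\nu$ inside a triangle-inequality estimate to recover the lower bound. The only technical items to check carefully are the uniform (over $s\in\supp{\lambda_n}$) smallness of $\|(e_{-s}-1)f\|_{L^2(\mu)}$ and the applicability of Tonelli's theorem, both routine given that $\mu$ is finite and $\nu$ is $\sigma$-finite.
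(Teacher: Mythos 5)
Your proposal is correct. The Bessel half coincides with the paper's argument (Fatou's lemma applied to $g_n\to g$ pointwise). For the lower frame bound you take a genuinely more direct route. The paper establishes the full two-sided limit $\int|\widehat{f\,d\mu}|^2\,d(\nu*\lambda_n)\to\int|\widehat{f\,d\mu}|^2\,d\nu$ (its equation \eqref{eq1.4.2}) by truncating at a radius $M$: on $\{\|x\|\le M\}$ it uses dominated convergence together with the local finiteness of $\nu$ (Proposition \ref{pr1.1}, available once the Bessel bound for $\nu$ is in hand), and on the tail $\{\|x\|\ge M\}$ it proves uniform smallness by observing that $e_{-s}f\to f$ in $L^2(\mu)$ as $s\to 0$, hence $\widehat{(e_{-s}f)\,d\mu}\to\widehat{f\,d\mu}$ in $L^2(\nu)$ by the Bessel bound. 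You use precisely the same key mechanism --- feeding the freshly proved Bessel bound for $\nu$ back into an $L^2(\nu)$-perturbation estimate for $\widehat{(e_{-s}f)\,d\mu}$ --- but you deploy it globally through a single triangle inequality, obtaining $\int g_n\,d\nu\le\bigl(\|\widehat{f\,d\mu}\|_{L^2(\nu)}+\sqrt{B\,\eta(r_n)}\bigr)^2$ and hence the lower bound in the limit. This buys three simplifications: no $M$-truncation, no separate dominated-convergence step (and hence no appeal to Proposition \ref{pr1.1} in this half), and no use of the compact support of $\mu$, since your dominated-convergence proof that $\|(e_{-s}-1)f\|_{L^2(\mu)}\to 0$ needs only that $\mu$ is finite. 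The paper's two-sided convergence is slightly stronger information, but only the one-sided estimate is needed for the theorem, and your argument is complete as written.
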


\begin{proof}
We show first that if $f$ is a continuous function on $\br^d$ then
for any $x\in\br^d$,
\begin{equation}
\int f(x+t)\,d\lambda_n(t)\rightarrow f(x)\mbox{ as }n\rightarrow
\infty. \label{eq1.4.1}
\end{equation}
Fix $\epsilon>0$. Since $f$ is continuous at $x$ there exits
$\delta>0$ such that $|f(x+t)-f(x)|<\epsilon$ if $\|t\|<\delta$.
There exists $n_\delta$ such that the support of $\lambda_n$ is
contained in $B(0,\delta)$ for all $n\geq n_\delta$. Then
$$\left|\int f(x+t)\,d\lambda_n(t)-f(x)\right|\leq\int|f(x+t)-f(x)|\,d\lambda_n(t)=\int_{B(0,\delta)}|f(x+t)-f(x)|\,d\lambda_n(t)\leq \int\epsilon\,d\lambda_n=\epsilon.$$

We prove first that $\nu$ is a Bessel measure with the same bound
$B$ as $\nu*\lambda_n$. Take $f\in L^2(\mu)$. Since
$\widehat{f\,d\mu}$ is continuous, we have by Fatou's lemma:
\begin{align*}
\int |\widehat{f\,d\mu}(x)|^2\,d\nu(x)&=\int\lim_n\int |\widehat{f\,d\mu}(x+t)|^2\,d\lambda_n(t)\,d\nu(x) \\
&\leq \liminf_n\iint |\widehat{f\,d\mu}(x+t)|^2\,d\lambda_n(t)\,d\nu(x) \\
&= \liminf_n\int |\widehat{f\,d\mu}(y)|^2\,d(\nu * \lambda_n)(y) \\
&\leq B\|f\|_{L^2(\mu)}^2.
\end{align*}

To obtain the lower bound we have to show that
\begin{equation}
\int |\widehat{f\,d\mu}(x)|^2\,d\nu*\lambda_n\rightarrow\int
|\widehat{f\,d\mu}(x)|^2\,d\nu. \label{eq1.4.2}
\end{equation}
For this, note first that given $M>0$,
$$\int_{\{\|x\|\leq M\}} \int |\widehat{f\,d\mu}(x+t)|^2\,d\lambda_n(t)\,d\nu(x)\rightarrow \int_{\{\|x\|\leq M\}}|\widehat{f\,d\mu(x)}|^2\,d\nu(x).$$
This follows from Lebesgue's dominated convergence theorem, since
$|\widehat{f\,d\mu}|$ is bounded, more precisely
$\|\widehat{f\,d\mu}\|_\infty\leq \|f\|_{L^1(\mu)}$, and since
$\nu(\{\|x\|\leq M\})<\infty$, by Proposition \ref{pr1.1}.

Secondly, given $\delta>0$ we will find $M>0$ such that for $n$
large enough

\begin{equation}
\int_{\{\|x\|\geq M\}} \int
|\widehat{f\,d\mu}(x+t)|^2\,d\lambda_n(t)\,d\nu(x)< \delta\mbox{
and }\int_{\{\|x\|\geq
M\}}|\widehat{f\,d\mu(x)}|^2\,d\nu(x)<\delta. \label{eq1.4.3}
\end{equation}

We have
$$\int_{\{\|x\|\geq M\}}|\widehat{f\,d\mu}(x+t)|^2\,d\nu(x)=\int\chi_{\{\|x\|\geq M\}}|\widehat{e_{-t}f\,d\mu}(x)|^2\,d\nu(x).$$

Since $\mu$ is compactly supported, for $t$ small enough, we have
that $e_{-t}f$ is close to $f$ in $L^2(\mu)$. Since $\nu$ is a
Bessel measure we obtain that $\widehat{e_{-t}f\,d\mu}$ is close
to $\widehat{f\,d\mu}$ in $L^2(\nu)$. Multiplying by the
characteristic function, we get that $\chi_{\{\|x\|\geq
M\}}\widehat{e_{-t}f\,d\mu}(x)$ is close to $\chi_{\{\|x\|\geq
M\}}\widehat{f\,d\mu}$ in $L^2(\nu)$. Since $\widehat{f\,d\mu}$
is in $L^2(\nu)$ we can find $M$ such that
$$\int\chi_{\{\|x\|\geq M\}}|\widehat{f\,d\mu}(x)|^2\,d\nu(x)<\delta/2.$$
Then, for $t$ small enough
\begin{equation} \label{eq1.4.4}
\int\chi_{\{\|x\|\geq M\}}|\widehat{e_{-t}f\,d\mu}(x)|^2\,d\nu(x)<\delta.
\end{equation}
Pick $n$ large enough so that inequality \eqref{eq1.4.4} holds for all $t$ in the support of $\lambda_n$,
then integrating the inequality \eqref{eq1.4.4} with respect to $\lambda_n$ we obtain
$$\int \int_{\{\|x\|\geq M\}} |\widehat{f\,d\mu}(x+t)|^2\,d\nu(x)\,d\lambda_n(t)< \delta.$$
The inequality \eqref{eq1.4.3} follows by an application of
Fubini's theorem.

The limit in \eqref{eq1.4.2} follows by splitting the integral in
two regions where $\|x\|\leq M$ or $\|x\|\geq M$. Then the lower
bound follows from \eqref{eq1.4.2}.
\end{proof}

In the following three proposition we present a general way of constructing Bessel/frame measures for a given measure. The idea is that if a convolution of two measures admits a Bessel measure then one can use the Fourier transform of one of the measures in the convolution as a weight for the Bessel measure to obtain a Bessel measure for the other measure in the convolution. The same is true for frame measures, but with certain restrictions.

\begin{proposition}\label{pr1.5}
Let $\mu$ , $\mu'$ be two Borel probability measures. For $f\in
L^1(\mu)$, the measure $(f\,d\mu)* \mu'$ is absolutely continuous
w.r.t. $\mu *\mu'$. We denote by $P_{\mu,\mu'}f$ or $Pf$ the
Radon-Nykodim derivative:
$$Pf=\frac{(f\,d\mu)*\mu'}{d(\mu*\mu')}.$$

\end{proposition}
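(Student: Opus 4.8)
The plan is to realize both convolutions as pushforwards under the addition map $A\colon\br^d\times\br^d\to\br^d$, $A(x,y)=x+y$, and then to reduce the absolute continuity to the triviality that integrating an $L^1$ function over a null set gives zero. First I would record that, since $f\in L^1(\mu)$, the set function $f\,d\mu$ is a \emph{finite} complex Borel measure whose total variation is $\int|f|\,d\mu=\|f\|_{L^1(\mu)}<\infty$. Consequently $(f\,d\mu)\times\mu'$ is a finite complex measure on $\br^d\times\br^d$, absolutely continuous with respect to the finite positive measure $\mu\times\mu'$ with Radon--Nikodym derivative $(x,y)\mapsto f(x)$. Because $A$ is continuous, hence Borel measurable, the pushforwards $A_*\big((f\,d\mu)\times\mu'\big)=(f\,d\mu)*\mu'$ and $A_*(\mu\times\mu')=\mu*\mu'$ are both finite Borel measures on $\br^d$ (complex, resp.\ positive); in particular $\mu*\mu'$ is finite, hence $\sigma$-finite, so the Radon--Nikodym theorem will be available once absolute continuity is established.

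The core step is to prove $(f\,d\mu)*\mu'\ll\mu*\mu'$. Let $E\subseteq\br^d$ be a Borel set with $(\mu*\mu')(E)=0$. Unwinding the pushforward, this says $(\mu\times\mu')\big(A^{-1}(E)\big)=0$, i.e.\ the set $\{(x,y):x+y\in E\}$ is $(\mu\times\mu')$-null. Then
\begin{equation*}
\big((f\,d\mu)*\mu'\big)(E)=\int_{A^{-1}(E)} f(x)\,d(\mu\times\mu')(x,y)=0,
\end{equation*}
since an $L^1(\mu\times\mu')$ function is being integrated over a $(\mu\times\mu')$-null set. Equivalently, one may write the left-hand integral as $\iint \chi_E(x+y)f(x)\,d\mu(x)\,d\mu'(y)$ and bound its modulus by $\iint\chi_E(x+y)|f(x)|\,d\mu(x)\,d\mu'(y)$, which vanishes because the nonnegative factor $\chi_E(x+y)$ is zero $(\mu\times\mu')$-a.e. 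This establishes the absolute continuity.

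Having established absolute continuity of a finite complex measure with respect to the finite (hence $\sigma$-finite) positive measure $\mu*\mu'$, the Radon--Nikodym theorem produces a unique $Pf=P_{\mu,\mu'}f\in L^1(\mu*\mu')$ with $d\big((f\,d\mu)*\mu'\big)=Pf\,d(\mu*\mu')$, which is exactly the asserted derivative. I do not anticipate a genuine obstacle here: the only points requiring care are bookkeeping ones, namely verifying that $f\,d\mu$ has finite total variation (so that the convolution is a bona fide finite complex measure) and justifying the identification of convolution with the pushforward under $A$ (a routine consequence of the pushforward formula together with Fubini's theorem, legitimate because all measures in sight are finite). The conceptual heart of the argument is simply that $\chi_E(x+y)\ge 0$, so a null set for $\mu*\mu'$ pulls back under $A$ to a null set for $\mu\times\mu'$, on which the $f$-weighted integral is forced to vanish.
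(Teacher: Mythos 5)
Your proof is correct and follows essentially the same route as the paper's: both reduce to the observation that $(\mu*\mu')(E)=0$ forces $\chi_E(x+y)$ to vanish $(\mu\times\mu')$-a.e., so the $f$-weighted integral over that set is zero; the paper just phrases this via Fubini and the sections $E-y$ rather than via the pushforward under the addition map. The extra bookkeeping you supply (finiteness of the total variation of $f\,d\mu$ and the invocation of Radon--Nikodym) is a harmless elaboration of what the paper leaves implicit.
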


\begin{proof}
Take a Borel set $E$ such that $\mu*\mu'(E)=0$. Then
$$0=\iint\chi_E(x+y)\,d\mu(x)\,d\mu'(y)=\int\mu(E-y)\,d\mu'(y).$$
This implies that $\mu(E-y)=0$ for $\mu'$-a.e. $y$.

Then $$\iint \chi_E(x+y)\,
f\,d\mu(x)\,d\mu'(y)=\iint\chi_E(x+y)f(x)\,d\mu(x)\,d\mu'(y)=\iint
\chi_{E-y}(x)f(x)\,d\mu(x)\,d\mu'(y)=0.$$ So
$(f\,d\mu)*\mu'(E)=0$.
\end{proof}

\begin{proposition}\label{pr04} Let $\mu$ and $\mu'$ be probability measures and
$1\leq p\leq \infty$.  If $f\in L^p(\mu)$ then the function $Pf$ is also
in $L^p(\mu*\mu')$ and
\begin{equation}
\|Pf\|_{L^p(\mu*\mu')}\leq\|f\|_{L^p(\mu)} \label{eq04.1}
\end{equation}
\end{proposition}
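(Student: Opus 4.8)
The plan is to recognize $Pf$ as an averaging (conditional-expectation) operator over the fibers of the sum map and to exploit the fact that such operators are contractions on every $L^p$. Write $S\colon\br^d\times\br^d\to\br^d$, $S(x,y)=x+y$, so that $\mu*\mu'=S_*(\mu\times\mu')$ is the pushforward of the product measure under $S$. The defining Radon--Nikodym property of $Pf$ from Proposition \ref{pr1.5}, unwound against a bounded test function $h$, reads
\begin{equation}
\int_{\br^d} h\,(Pf)\,d(\mu*\mu')=\int_{\br^d} h\,d\big((f\,d\mu)*\mu'\big)=\iint h(x+y)\,f(x)\,d\mu(x)\,d\mu'(y). \label{eq:pf-pairing}
\end{equation}
Comparing \eqref{eq:pf-pairing} with the pushforward identity $\iint h(x+y)\,d\mu(x)\,d\mu'(y)=\int h\,d(\mu*\mu')$ shows that $(Pf)\circ S$ is exactly the conditional expectation of $(x,y)\mapsto f(x)$ given the $\sigma$-algebra generated by $S$, computed in $\mu\times\mu'$; conceptually this is why the contraction estimate should hold for all $p$ simultaneously. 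In the actual write-up I would keep this only as motivation and run an elementary duality argument instead.

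For $1\le p<\infty$ I would use $L^p$--$L^q$ duality on the finite measure space $(\br^d,\mu*\mu')$, where $q$ is the conjugate exponent. Note that $Pf\in L^1(\mu*\mu')$ since it is the density of the finite (signed/complex) measure $(f\,d\mu)*\mu'$, so the duality characterization of the norm applies. Fix $h\in L^q(\mu*\mu')$ with $\|h\|_{L^q(\mu*\mu')}\le 1$. Starting from \eqref{eq:pf-pairing} and applying Hölder's inequality on the product space $(\br^d\times\br^d,\mu\times\mu')$ gives
\begin{equation}
\left|\int h\,(Pf)\,d(\mu*\mu')\right|\le\left(\iint|h(x+y)|^q\,d\mu(x)\,d\mu'(y)\right)^{1/q}\left(\iint|f(x)|^p\,d\mu(x)\,d\mu'(y)\right)^{1/p}. \label{eq:pf-holder}
\end{equation}
The two factors then collapse: the first equals $\|h\|_{L^q(\mu*\mu')}\le 1$ by the pushforward identity, and the second equals $\|f\|_{L^p(\mu)}$ because $\mu'$ is a probability measure (Fubini plus $\mu'(\br^d)=1$). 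Taking the supremum over all such $h$ yields $\|Pf\|_{L^p(\mu*\mu')}\le\|f\|_{L^p(\mu)}$.

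Finally, the endpoint $p=\infty$ follows by letting $p\to\infty$: since $\mu*\mu'$ is a probability measure and the bound $\|Pf\|_{L^p(\mu*\mu')}\le\|f\|_{L^p(\mu)}\le\|f\|_{L^\infty(\mu)}$ holds for every finite $p$, one obtains $\|Pf\|_{L^\infty(\mu*\mu')}\le\|f\|_{L^\infty(\mu)}$ (alternatively, a direct argument: if $f\le M$ then \eqref{eq:pf-pairing} with $h=\chi_E$ gives $\int_E Pf\,d(\mu*\mu')\le M\,(\mu*\mu')(E)$ for all Borel $E$, hence $Pf\le M$ a.e., the complex case being reduced to the real one by rotating $f$ by unimodular constants). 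The only points requiring care are the bookkeeping that converts integrals against $\mu\times\mu'$ into integrals against $\mu*\mu'$ in \eqref{eq:pf-holder} (clean because both measures are finite) and the extension of the pairing \eqref{eq:pf-pairing} from bounded $h$ to all of $L^q$; neither is a genuine obstacle, so the real content is simply the Hölder step organized around the convolution structure.
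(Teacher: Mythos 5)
Your proof is correct, but it takes a genuinely different route from the paper's. The paper handles the two endpoints separately --- $p=\infty$ by pairing against $L^1(\mu*\mu')$ plus a contradiction argument, and $p=1$ by integrating against the unimodular function $\overline{Pf}/|Pf|$ --- and then gets all intermediate exponents by invoking the Riesz--Thorin interpolation theorem. You instead prove every finite $p$ in one stroke: pair $Pf$ against $h\in L^q(\mu*\mu')$, unwind the pairing to the product space $(\br^d\times\br^d,\mu\times\mu')$, apply H\"older there, and collapse the two factors using the pushforward identity $\iint|h(x+y)|^q\,d\mu\,d\mu'=\int|h|^q\,d(\mu*\mu')$ and the fact that $\mu'$ is a probability measure. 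This avoids interpolation entirely and explains, via the conditional-expectation interpretation of $Pf\circ S$, why the constant is $1$ for every $p$ simultaneously; the paper's version is more modular but imports heavier machinery. Your argument is complete modulo two routine points you already flag: the duality characterization $\|Pf\|_{L^p}=\sup\{|\int h\,Pf\,d(\mu*\mu')| : \|h\|_{L^q}\le 1\}$ requires the converse of H\"older, which is unproblematic here because $\mu*\mu'$ is a finite measure, $Pf\in L^1(\mu*\mu')$, and it suffices to test against simple (hence bounded) $h$, for which the pairing identity from Proposition \ref{pr1.5} holds directly; and for $p=1$ the displayed H\"older step should be read in its degenerate form with $q=\infty$, where the first factor is the essential supremum of $h\circ S$ with respect to $\mu\times\mu'$, which coincides with $\|h\|_{L^\infty(\mu*\mu')}$ since $\mu*\mu'$ is the pushforward of $\mu\times\mu'$ under the sum map. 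Either your $p\to\infty$ limit or your direct argument for the $L^\infty$ endpoint is fine and is essentially the paper's own duality argument in disguise.
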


\begin{proof}
For $p=\infty$, let $g\in L^1(\mu*\mu')$. Then
\begin{multline*}
\left|\int gPf\,d\mu*\mu'\right|=\left|\int g\,d((f\,d\mu)*\mu')\right|=\left|\iint g(x+y)f(x)\,d\mu(x)\,d\mu'(y)\right| \\
\leq
\|f\|_\infty\left|\iint|g(x+y)|\,d\mu(x)\,d\mu(y)\right|=\|f\|_\infty\int
|g|\,d(\mu*\mu').
\end{multline*}

Suppose $\|Pf\|_\infty>\|f\|_\infty$. Then for
$\epsilon>0$ small, the set $A:=\{x:
|Pf(x)|\geq\|f\|_\infty+\epsilon\}$ has positive
$\mu*\mu'$-measure. Let $g:=\frac{|Pf|}{Pf}\chi_A$. We have that
$g\in L^1(\mu*\mu')$ and
\begin{equation*}
\left|\int g Pf\,d\mu*\mu'\right|=\int_A|Pf|\,d\mu*\mu'\geq (\|f\|_\infty+\epsilon)\mu*\mu'(A)=(\|f\|_\infty+\epsilon)\int |g|\,d\mu*\mu'.
\end{equation*}
This contradicts the previous computation.

For $p=1$, take $g(x)=|Pf(x)|/Pf(x)$ if $Pf(x)\neq 0$ and $g(x)=0$
otherwise. Then
\begin{multline*}
\int|Pf|\,d\mu*\mu'=\int gPf\,d\mu*\mu'=\int g\,d((f\,d\mu)*\mu') \\
=\iint g(x+y)f(x)\,d\mu(x)\,d\mu(y) \leq\iint |f(x)|\,d\mu(x)\,d\mu(y)=\|f\|_{L^1(\mu)}.
\end{multline*}

For $1<p<\infty$ the inequality follows from the Riesz-Thorin
interpolation theorem.

\end{proof}

\begin{proposition}\label{pr02}
Let $\mu$, $\mu'$ be probability measures. Assume that $\mu*\mu'$
has a Bessel measure $\nu$. Then $|\hat\mu'|^2\,d\nu$ is a Bessel
measure for $\mu$ with the same bound.

If in addition $\nu$ is a frame measure for $\mu*\mu'$ with bounds
$A$ and $B$, and $c\|f\|_{L^2(\mu)}^2\leq
\|Pf\|_{L^2(\mu*\mu')}^2$ for all $f\in L^2(\mu)$, then
$|\hat\mu'|^2\,d\nu$ is a frame measure for $\mu$ with bounds $cA$
and $B$.
\end{proposition}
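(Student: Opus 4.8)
The plan is to reduce both statements to the Bessel/frame inequality that $\nu$ already satisfies on $\mu*\mu'$, transporting it to $\mu$ through the weight $|\hat\mu'|^2$. The engine is a single factorization identity for Fourier transforms of convolutions, after which everything is bookkeeping with the norm bound for the operator $P$ from Proposition \ref{pr04}.

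First I would establish the central identity. Since $f\in L^2(\mu)\subseteq L^1(\mu)$ (as $\mu$ is a probability measure) and the exponential kernel is bounded, Fubini applies and, using Proposition \ref{pr1.5} to write $(f\,d\mu)*\mu' = Pf\,d(\mu*\mu')$, a direct computation gives for every $t\in\br^d$
$$\widehat{Pf\,d(\mu*\mu')}(t) = \widehat{(f\,d\mu)*\mu'}(t) = \iint e^{-2\pi i t\cdot(x+y)}f(x)\,d\mu(x)\,d\mu'(y) = \widehat{f\,d\mu}(t)\,\hat\mu'(t).$$
This is the only place where the structure of the problem is genuinely used: it shows that inserting the weight $|\hat\mu'|^2$ converts $\int|\widehat{Pf\,d(\mu*\mu')}|^2\,d\nu$ into $\int|\widehat{f\,d\mu}|^2\,d(|\hat\mu'|^2\,d\nu)$.

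For the Bessel statement I would take $f\in L^2(\mu)$ and rewrite, via the identity,
$$\|\widehat{f\,d\mu}\|_{L^2(|\hat\mu'|^2\,d\nu)}^2 = \int |\widehat{f\,d\mu}(t)|^2|\hat\mu'(t)|^2\,d\nu(t) = \int |\widehat{Pf\,d(\mu*\mu')}(t)|^2\,d\nu(t).$$
Because $\nu$ is Bessel for $\mu*\mu'$ with bound $B$, the right-hand side is at most $B\|Pf\|_{L^2(\mu*\mu')}^2$, and Proposition \ref{pr04} with $p=2$ gives $\|Pf\|_{L^2(\mu*\mu')}\le\|f\|_{L^2(\mu)}$. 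Chaining these yields the bound $B\|f\|_{L^2(\mu)}^2$, so $|\hat\mu'|^2\,d\nu$ is Bessel for $\mu$ with the same bound $B$. For the frame statement the upper bound is identical; for the lower bound the frame inequality for $\nu$ gives $\int |\widehat{Pf\,d(\mu*\mu')}|^2\,d\nu \ge A\|Pf\|_{L^2(\mu*\mu')}^2$, and the extra hypothesis $c\|f\|_{L^2(\mu)}^2\le\|Pf\|_{L^2(\mu*\mu')}^2$ upgrades this to $cA\|f\|_{L^2(\mu)}^2$. Rewriting the left side through the identity as $\|\widehat{f\,d\mu}\|_{L^2(|\hat\mu'|^2\,d\nu)}^2$ produces frame bounds $cA$ and $B$.

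I do not expect a substantive obstacle: once the factorization identity is secured, the result follows by concatenating the hypotheses. The only delicate point is justifying the interchange of integration in the identity, and the role of the coercivity hypothesis $c\|f\|_{L^2(\mu)}^2\le\|Pf\|_{L^2(\mu*\mu')}^2$, which is exactly what is needed because $P$ may shrink norms and the lower frame bound on $\mu*\mu'$ must be pulled back to a lower bound on $\mu$; without such a constant $c$ the map $f\mapsto Pf$ could annihilate mass and destroy the lower frame inequality.
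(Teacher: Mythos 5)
Your proof is correct and follows essentially the same route as the paper's: the factorization $\widehat{(f\,d\mu)*\mu'}=\widehat{f\,d\mu}\cdot\hat\mu'$ combined with the identification $(f\,d\mu)*\mu'=Pf\,d(\mu*\mu')$ from Proposition \ref{pr1.5}, the Bessel/frame inequality for $\nu$ applied to $Pf$, and the norm bound $\|Pf\|_{L^2(\mu*\mu')}\le\|f\|_{L^2(\mu)}$ from Proposition \ref{pr04} for the upper bound, with the coercivity hypothesis supplying the lower bound. The paper's proof is just a terser version of the same chain of (in)equalities.
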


\begin{proof}
Take $f\in L^2(\mu)$. Then

$$\int|\widehat{(f\,d\mu)}|^2\cdot|\widehat\mu'|^2\,d\nu=\int|\widehat{(f\,d\mu)*\mu'}|^2\,d\nu=\int|\widehat{(Pf\,d\mu*\mu')}|^2\,d\nu\geq A\|Pf\|_{L^2(\mu*\mu')}^2\geq cA\|f\|_{L^2(\mu)}^2.$$
The upper bound follows from Proposition \ref{pr04}.
\end{proof}

Next we prove some stability results.
\begin{theorem}\label{th1.9}
Let $\mu$ be a compactly supported Borel probability measure. If $\nu$ is a Bessel
measure for $\mu$ then for any $r>0$ there exists a constant $D>0$
such that
\begin{equation}
\int\sup_{|y|\leq r}|\widehat{f\,d\mu}(x+y)|^2\,d\nu(x)\leq
D\|f\|_{L^2(\mu)}^2,\mbox{ for all }f\in L^2(\mu). \label{eq1.9.1}
\end{equation}
If $\nu$ is a frame measure for $\mu$ then there exists constants
$\delta>0$ and $C>0$ such that
\begin{equation}
C\|f\|_{L^2(\mu)}^2\leq \int
\inf_{|y|\leq\delta}|\widehat{f\,d\mu}(x+y)|^2\,d\nu(x),\mbox{ for
all }f\in L^2(\mu). \label{eq1.9.2}
\end{equation}
\end{theorem}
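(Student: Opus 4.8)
The plan is to handle both inequalities through a single ``oscillation'' mechanism, exploiting that $\mu$ is compactly supported, say $\supp{\mu}\subseteq B(0,M)$, so that $F:=\widehat{f\,d\mu}$ is smooth with bounded derivatives (because $\mu$ is a finite measure with compact support). Two identities will be used throughout. First, the translation identity $\widehat{f\,d\mu}(x+y)=\widehat{(e_{-y}f)\,d\mu}(x)$, which, since $|e_{-y}|\equiv 1$, shows that $\int|F(x+y)|^2\,d\nu(x)=\int|\widehat{(e_{-y}f)\,d\mu}|^2\,d\nu$ is bounded by the Bessel/frame bound uniformly in $y$. Second, the differentiation identity $\partial^\alpha F=\widehat{((-2\pi i t)^\alpha f)\,d\mu}$, whose polynomial weight satisfies $|(-2\pi i t)^\alpha|\le(2\pi M)^{|\alpha|}$ on $\supp{\mu}$, so that $\|(-2\pi i t)^\alpha f\|_{L^2(\mu)}\le(2\pi M)^{|\alpha|}\|f\|_{L^2(\mu)}$. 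These are what let the Bessel/frame hypothesis ``see'' translates and derivatives of $F$.

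For \eqref{eq1.9.1} I would first pass from the pointwise supremum to an $L^2$-integral of derivatives via Sobolev embedding: fixing an integer $k>d/2$, the embedding $W^{k,2}(B(x,r))\hookrightarrow L^\infty(B(x,r))$ gives, with a constant $C=C(r,d,k)$ independent of $x$ by translation invariance,
\[ \sup_{|y|\le r}|F(x+y)|^2\le C\sum_{|\alpha|\le k}\int_{|y|\le r}|\partial^\alpha F(x+y)|^2\,dy. \]
Integrating against $d\nu(x)$ and interchanging the $dy$ and $d\nu$ integrations by Tonelli, the inner integral becomes $\int|\widehat{((-2\pi i t)^\alpha e_{-y}f)\,d\mu}|^2\,d\nu$, which by the Bessel bound is at most $B(2\pi M)^{2|\alpha|}\|f\|_{L^2(\mu)}^2$, uniformly in $y$. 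Summing over $|\alpha|\le k$ and multiplying by $\operatorname{vol}(B(0,r))$ yields \eqref{eq1.9.1} with an explicit $D$.

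For \eqref{eq1.9.2} I would start from the frame lower bound $A\|f\|_{L^2(\mu)}^2\le\int|F(x)|^2\,d\nu$ and show that replacing $|F(x)|^2$ by $\inf_{|y|\le\delta}|F(x+y)|^2$ costs only $O(\delta)\|f\|_{L^2(\mu)}^2$. Writing the defect pointwise as $\sup_{|y|\le\delta}\big(|F(x)|^2-|F(x+y)|^2\big)$ and factoring $|F(x)|^2-|F(x+y)|^2$ as a difference times a sum of moduli, I would bound its $\nu$-integral by Cauchy--Schwarz using the two factors $G_\delta(x):=\sup_{|y|\le\delta}|F(x)-F(x+y)|$ and $2\sup_{|y|\le\delta}|F(x+y)|$. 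The second factor is controlled in $L^2(\nu)$ by \eqref{eq1.9.1} (with $r=\delta\le 1$). For the first, the fundamental theorem of calculus gives $G_\delta(x)\le\delta\,\sup_{|z|\le\delta}|\nabla F(x+z)|$, and since each $\partial_j F=\widehat{(-2\pi i t_j f)\,d\mu}$ carries a weight bounded by $2\pi M$, applying \eqref{eq1.9.1} to the components of $\nabla F$ shows $\|G_\delta\|_{L^2(\nu)}=O(\delta)\|f\|_{L^2(\mu)}$. Combining, the defect is at most $\delta K\|f\|_{L^2(\mu)}^2$ for an explicit $K$; choosing $\delta$ with $\delta K<A$ gives \eqref{eq1.9.2} with $C=A-\delta K>0$.

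The main obstacle is the first reduction in \eqref{eq1.9.1}: dominating a pointwise supremum by $L^2$-quantities that the Bessel hypothesis can control. In dimension one this is elementary, since $\sup_{|y|\le r}|F(x+y)|^2\le|F(x)|^2+\int_{-r}^{r}\big(|F|^2+|F'|^2\big)(x+s)\,ds$ follows directly from the fundamental theorem of calculus; but in higher dimensions one genuinely needs derivatives up to order $k>d/2$, which is precisely where the compact support of $\mu$ is essential, guaranteeing that these higher derivatives of $F$ remain Fourier transforms of $L^2(\mu)$-functions with polynomially bounded weights. Once \eqref{eq1.9.1} is in hand, the frame estimate \eqref{eq1.9.2} follows by the perturbation argument above, so \eqref{eq1.9.1} carries the real weight of the proof.
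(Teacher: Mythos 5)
Your proof is correct, but it takes a genuinely different route from the paper's. The paper proves a single perturbation estimate: for any Borel selection $\epsilon(x)$ with $|\epsilon(x)|\le r$, it expands $t\mapsto \widehat{f\,d\mu}(x+tu)$ in a Taylor series, applies Cauchy--Schwarz to the series coefficients, and uses the Bessel bound on each derivative $\widehat{((-2\pi i u\cdot y)^k f)\,d\mu}$ to obtain $\int|\widehat{f\,d\mu}(x+\epsilon(x))-\widehat{f\,d\mu}(x)|^2\,d\nu(x)\le B(e^{r^2}-1)(e^{(2\pi M)^2}-1)\|f\|_{L^2(\mu)}^2$; Minkowski's inequality in $L^2(\nu)$ then yields both \eqref{eq1.9.1} (for any $r$) and \eqref{eq1.9.2} (for $r$ small enough that the error term drops below $A^{1/2}$), with the sup and inf realized by measurable selections $\epsilon$. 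You instead dominate the pointwise supremum by a Sobolev average of derivatives over $B(x,r)$ and swap integrals by Tonelli, which gives \eqref{eq1.9.1} directly and avoids the measurable-selection step altogether; your \eqref{eq1.9.2} is then a first-order perturbation argument (gradient bound plus Cauchy--Schwarz) that reuses \eqref{eq1.9.1}. What the paper's route buys is a self-contained elementary series computation with one error constant serving both bounds; what yours buys is the elimination of the selection argument (the paper's justification that $\epsilon$ can be chosen Borel is the least rigorous point of its proof) and a lower bound needing only first derivatives, at the cost of invoking the embedding $W^{k,2}\hookrightarrow L^\infty$ for $k>d/2$ and making the argument two-stage. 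If you write this up, make explicit that Tonelli requires $\nu$ to be $\sigma$-finite (Proposition \ref{pr1.1}) and fix the constant $D$ of \eqref{eq1.9.1} at $r=1$ in the lower-bound step so that it does not vary with $\delta\le 1$.
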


\begin{proof}

Let $\epsilon:\br^d\rightarrow\br^d$ be some Borel measurable
function, such that $|\epsilon(x)|\leq r$ for all $x\in\br^d$.
Take $f\in L^2(\mu)$ and $x\in\br^d$. Let
$u:=\epsilon(x)/|\epsilon(x)|$ and
$$g_x(t):=\widehat{f\,d\mu}(x+tu),\quad(t\in\br).$$
The function $g$ is analytic and its derivatives are
$$g_x^{(k)}(t)=\int e^{-2\pi i (x+tu)\cdot y}(-2\pi i u\cdot y)^kf(y)\,d\mu(y)=((-2\pi i u\cdot y)^kf)^{\widehat{}}(x+tu).$$
Writing the Taylor expansion at $0$ we have
$$|g_x(|\epsilon(x)|)-g_x(0)|^2=\left|\sum_{k=1}^\infty \frac{g_x^{(k)}(0)}{k!}|\epsilon(x)|^k\right|^2$$
and using the Cauchy-Schwartz inequality
$$\leq \sum_{k=1}^\infty\frac{|g_x^{(k)}(0)|^2}{k!}\sum_{k=1}^\infty\frac{|\epsilon(x)|^{2k}}{k!}=\sum_{k=1}^\infty\frac{|g_x^{(k)}(0)|^2}{k!}\cdot(e^{|\epsilon(x)|^2}-1)\leq (e^{r^2}-1)\sum_{k=1}^\infty\frac{|g_x^{(k)}(0)|^2}{k!}$$
We use the Bessel bound and obtain
$$\int |g_x^{(k)}(0)|^2\,d\nu(x)=\int |((-2\pi i u\cdot y)^kf)^{\widehat{}}(x)|^2\,d\nu(x)\leq B\|(-2\pi i u\cdot y)^kf(y)\|_{L^2(\mu)}^2\leq (2\pi M)^{2k}B\|f\|_{L^2(\mu)}^2,$$
where the constant $M$ is chosen such that the support of $\mu$ is
contained in the ball $|x|\leq M$. Then
$$\int |\widehat{f\,d\mu}(x+\epsilon(x))-\widehat{f\,d\mu}(x)|^2\,d\nu(x)=\int |g_x(|\epsilon(x)|)-g_x(0)|^2\,d\nu(x)\leq (e^{r^2}-1)\int \sum_{k=1}^\infty\frac{|g_x^{(k)}(0)|^2}{k!}\,d\nu(x)$$$$=(e^{r^2}-1)\sum_{k=1}^\infty\frac{1}{k!}\int |g_x^{(k)}(0)|^2\,d\nu(x)\leq (e^{r^2}-1)\sum_{k=1}^\infty \frac{1}{k!}(2\pi M)^{2k}B\|f\|_{L^2(\mu)^2}^2=B(e^{r^2}-1)(e^{(2\pi M)^2}-1)\|f\|_{L^2(\mu)}^2.$$

Then, by Minkowski's inequality,
$$\left(\int |\widehat{f\,d\mu}(x+\epsilon(x))|^2\,d\nu(x)\right)^{\frac12}\leq \left(\int |\widehat{f\,d\mu}(x)|^2\,d\nu(x)\right)^{\frac12}+\left(\int |\widehat{f\,d\mu}(x+\epsilon(x))-\widehat{f\,d\mu}(x)|^2\,d\mu(x)\right)^{\frac12}$$
$$\leq (B^{\frac12}+(B(e^{r^2}-1)(e^{(2\pi M)^2}-1))^{\frac12})\|f\|_{L^2(\mu)}.$$
Taking $\epsilon(x)$ such that
$|\widehat{f\,d\mu}(x+\epsilon(x))|^2=\sup_{|y|\leq
r}|\widehat{f\,d\mu}(x+y)|^2$, we obtain \eqref{eq1.9.1}. Such a
function $\epsilon$ is Borel measurable since the function
$\widehat{f\,d\mu}$ is continuous, analytic in each variable.

For \eqref{eq1.9.2}, take $\delta=r>0$ small enough such that
$$C:=A^{\frac12}-(B(e^{r^2}-1)(e^{(2\pi M)^2}-1))^{\frac12}>0,$$
where $A$ is the lower frame bound for $\nu$.

Then, by Minkowski's inequality
$$\left(\int |\widehat{f\,d\mu}(x+\epsilon(x))|^2\,d\nu(x)\right)^{\frac12}\geq \left(\int |\widehat{f\,d\mu}(x)|^2\,d\nu(x)\right)^{\frac12}-\left(\int |\widehat{f\,d\mu}(x+\epsilon(x))-\widehat{f\,d\mu}(x)|^2\,d\mu(x)\right)^{\frac12}$$
$$\geq (A^{\frac12}-(B(e^{r^2}-1)(e^{(2\pi M)^2}-1))^{\frac12})\|f\|_{L^2(\mu)}.$$

Take $\epsilon(x)$ such that
$|\widehat{f\,d\mu}(x+\epsilon(x))|^2=\inf_{|y|\leq
r}|\widehat{f\,d\mu}(x+y)|^2$ and we obtain \eqref{eq1.9.2}.
\end{proof}

Using this stability of frame measures we can prove that a certain form of discretization will produce atomic frame measures from a general frame measure.
\begin{theorem}\label{th1.10}
If a measure $\mu$ has a Bessel/frame measure $\nu$ then it has
also an atomic one. More precisely, let $Q=[0,1)^d$ and $r>0$; if
$\nu$ is a Bessel measure for $\mu$ and $(x_k)_{k\in\bz^d}$ is a
set of points such that $x_k\in r(k+Q)$ for all $k\in\bz^d$ then
$$\nu':=\sum_{k\in\bz^d}\nu(r(k+Q))\delta_{x_k}$$
is a Bessel measure for $\mu$. We call $\nu'$ a discretization of the
measure $\nu$.

If $\nu$ is a frame measure for $\mu$ and $r$ is small enough then
the measure $\nu'$ defined above is a frame measure for $\mu$.
\end{theorem}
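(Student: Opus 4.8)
The plan is to reduce both assertions directly to the stability estimates of Theorem~\ref{th1.9}, exploiting that the cubes $\{r(k+Q):k\in\bz^d\}$ tile $\br^d$ and that each has diameter $r\sqrt{d}$. The key observation is that integrating $|\widehat{f\,d\mu}(x_k)|^2$ against the atomic measure $\nu'$ can be rewritten as integrating the \emph{constant} $|\widehat{f\,d\mu}(x_k)|^2$ against $\nu$ over the cube $r(k+Q)$; once inside that cube, the point $x_k$ differs from the integration variable by at most $r\sqrt{d}$, which is precisely where the sup/inf quantities of Theorem~\ref{th1.9} enter. Note that Theorem~\ref{th1.9} is stated for an arbitrary radius, so I will apply it with radius $r\sqrt{d}$ in place of its generic parameter.

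For the Bessel statement, fix $f\in L^2(\mu)$. For each $k$ and each $x\in r(k+Q)$ we have $|x_k-x|\leq r\sqrt{d}$, since both points lie in a cube of side $r$. Hence for $x\in r(k+Q)$,
\[
|\widehat{f\,d\mu}(x_k)|^2 \leq \sup_{|y|\leq r\sqrt{d}} |\widehat{f\,d\mu}(x+y)|^2 .
\]
Integrating over $r(k+Q)$ against $\nu$ and summing over $k\in\bz^d$, the tiling property turns the left side into $\int |\widehat{f\,d\mu}|^2\,d\nu'$ and the right side into the integral of \eqref{eq1.9.1} (with radius $r\sqrt{d}$), which Theorem~\ref{th1.9} bounds by $D\|f\|_{L^2(\mu)}^2$. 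Thus $\nu'$ is Bessel.

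For the frame statement the upper bound is already the Bessel case just proved, so only the lower bound remains. I would run the same argument with the inequality reversed: for $x\in r(k+Q)$,
\[
|\widehat{f\,d\mu}(x_k)|^2 \geq \inf_{|y|\leq r\sqrt{d}} |\widehat{f\,d\mu}(x+y)|^2 ,
\]
and integrating over each cube and summing gives
\[
\int |\widehat{f\,d\mu}|^2\,d\nu' \geq \int \inf_{|y|\leq r\sqrt{d}} |\widehat{f\,d\mu}(x+y)|^2\,d\nu(x).
\]
Provided $r$ is chosen small enough that $r\sqrt{d}\leq\delta$, with $\delta$ the constant furnished by \eqref{eq1.9.2}, the right side is bounded below by $C\|f\|_{L^2(\mu)}^2$, yielding the lower frame bound for $\nu'$. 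This is exactly where the hypothesis ``$r$ small enough'' is used.

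The argument is essentially a repackaging of Theorem~\ref{th1.9}, so there is no serious analytic obstacle; the only points to check are the elementary geometric facts that $\diam(r(k+Q))=r\sqrt{d}$ and that the cubes $r(k+Q)$ partition $\br^d$, which together let the atomic sum defining $\int|\widehat{f\,d\mu}|^2\,d\nu'$ be matched term by term with a $\nu$-integral over each cube. The smallness condition on $r$ in the frame case is precisely the inherited requirement $r\sqrt{d}\leq\delta$, and nothing further is needed.
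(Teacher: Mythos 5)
Your proposal is correct and follows essentially the same route as the paper: the paper likewise writes $x_k = x + \epsilon(x)$ with $|\epsilon(x)|\le r\sqrt{d}$ on each cube $r(k+Q)$, identifies $\int|\widehat{f\,d\mu}|^2\,d\nu'$ with $\int|\widehat{f\,d\mu}(x+\epsilon(x))|^2\,d\nu(x)$, sandwiches it between the inf and sup integrals, and invokes Theorem~\ref{th1.9}, with the smallness of $r$ needed exactly so that $r\sqrt{d}\le\delta$ in \eqref{eq1.9.2}. No gaps.
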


\begin{proof}
Define $\epsilon(x)=x_k-x$ if $x\in r(k+Q)$. Then,
$|\epsilon(x)|\leq r\sqrt{d}=:r'$ and for all $f\in L^2(\mu)$,
$$\int |\widehat{f\,d\mu}(x+\epsilon(x))|^2\,d\nu(x)=\sum_{k\in\bz^d}\int_{r(k+Q)}|\widehat{f\,d\mu}(x_k)|^2\,d\nu(x)=\sum_{k\in\bz^d}\nu(r(k+Q))|\widehat{f\,d\mu}(x_k)|^2.$$

But since
$$ \int \inf_{|y|\leq r'}|\widehat{f\,d\mu}(x+y)|^2\,d\nu(x)\leq \int |\widehat{f\,d\mu}(x+\epsilon(x))|^2\,d\nu(x)\leq \int \sup_{|y|\leq r'}|\widehat{f\,d\mu}(x+y)|^2\,d\nu(x),$$
everything follows from Theorem \ref{th1.9}.
\end{proof}

\begin{corollary}
If $\nu$ is a frame measure for $\mu$ and $r > 0$ is sufficiently small, then $\{ c_{k} e_{x_{k}} : k \in \bz^d \}$ is a weighted Fourier frame for $\mu$, where $x_{k} \in r(k + Q)$ and $c_{k} = \sqrt{\nu(r(k +Q))}$.
\end{corollary}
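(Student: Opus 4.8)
The plan is to observe that this corollary is simply the conjunction of Theorem \ref{th1.10} with the converse direction recorded in Remark \ref{rm1.1}. First I would invoke Theorem \ref{th1.10}: since $\nu$ is assumed to be a frame measure for $\mu$, there is an $r_0>0$ such that for every $0<r<r_0$ the discretized measure
$$\nu'=\sum_{k\in\bz^d}\nu(r(k+Q))\,\delta_{x_k}$$
is again a frame measure for $\mu$, say with bounds $A',B'>0$. By Proposition \ref{pr1.1} each weight $\nu(r(k+Q))$ is finite (frame measures are Bessel, hence locally finite), so $\nu'$ is a genuine purely atomic measure whose atoms are the points $x_k$ and whose masses are exactly $c_k^2=\nu(r(k+Q))$.

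Second, I would feed this atomic frame measure into the computation of Remark \ref{rm1.1}. For any $f\in L^2(\mu)$ one has $\langle f, c_k e_{x_k}\rangle_{L^2(\mu)}=c_k\,\widehat{f\,d\mu}(x_k)$, and therefore
$$\sum_{k\in\bz^d}\bigl|\langle f, c_k e_{x_k}\rangle_{L^2(\mu)}\bigr|^2=\sum_{k\in\bz^d}\nu(r(k+Q))\,|\widehat{f\,d\mu}(x_k)|^2=\|\widehat{f\,d\mu}\|_{L^2(\nu')}^2.$$
The frame-measure inequalities for $\nu'$ then say precisely that $A'\|f\|_{L^2(\mu)}^2\le \sum_k|\langle f, c_k e_{x_k}\rangle_{L^2(\mu)}|^2\le B'\|f\|_{L^2(\mu)}^2$, which is the definition of $\{c_k e_{x_k}:k\in\bz^d\}$ being a weighted Fourier frame for $L^2(\mu)$, with $c_k=\sqrt{\nu(r(k+Q))}$ and $x_k\in r(k+Q)$ as required.

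Since both ingredients are already available, there is no genuine obstacle here; the only points meriting a line of care are the identification of the proposed weights $c_k=\sqrt{\nu(r(k+Q))}$ with the atomic masses of $\nu'$, and the remark that these masses are finite so that the family indexed by $k\in\bz^d$ is a bona fide sequence of vectors in $L^2(\mu)$ — both immediate from Proposition \ref{pr1.1}. I would write the proof as a two-sentence deduction: apply Theorem \ref{th1.10} to obtain the atomic frame measure $\nu'$, then apply Remark \ref{rm1.1} to read off the weighted Fourier frame.
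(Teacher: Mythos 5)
Your proposal is correct and follows exactly the paper's own route: invoke Theorem \ref{th1.10} to get that the discretized atomic measure $\nu'=\sum_k \nu(r(k+Q))\delta_{x_k}$ is a frame measure for small $r$, then unwind the definition via the identity $\langle f, c_k e_{x_k}\rangle_{L^2(\mu)}=c_k\,\widehat{f\,d\mu}(x_k)$ as in Remark \ref{rm1.1}. No substantive difference from the paper's proof.
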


\begin{proof}
The measure $\nu' = \sum_{k \in \bz^d} c_{k}^2 \delta_{x_{k}}$ is a frame measure for $\mu$, so for any $f \in L^2(\mu)$ we have
\[
 A \| f \|^{2}_{\mu} \leq \int | \widehat{f \ d \mu} (t) |^2 d \nu' = \sum_{k \in \bz^d} c_{k}^2 | \widehat{f d \ \mu} (x_{k}) |^2  = \sum_{k \in \bz^d} |\langle f , c_{k} e_{x_{k}} \rangle|^2 \leq B \| f \|^{2}_{\mu}
\]
\end{proof}

\section{Beurling dimension}

In \cite{DHSW10}, Beurling dimension (of sequences) as defined in \cite{CKS08} is used to provide a partial characterization of Bessel sequences of exponentials.  We extend the definition of Beurling dimension to measures, and using this definition we obtain similar results concerning Bessel measures.  Specifically, we show that any Bessel measure for $\mu$ must have Beurling dimension which is sufficiently small, depending upon a certain property that $\mu$ may possess.  We begin with some basic properties, including certain invariances which will be useful later, of Beurling dimension of measures.

\begin{definition}\label{def2.1}
Let $Q$ be the unit cube $Q=[0,1)^d$. For a locally finite measure
$\nu$ and $\alpha\geq 0$ we define the {\it $\alpha$-upper
Beurling density} by
\begin{equation}
\D_\alpha(\nu):=\limsup_{R\rightarrow\infty}\sup_{x\in\br^d}\frac{\nu(x+RQ)}{R^\alpha}.
\label{eq2.1.1}
\end{equation}
We define the {\it (upper) Beurling dimension} of $\nu$ by
\begin{equation}
\dim\nu:=\sup\{\alpha\geq 0 : \D_\alpha(\nu)=\infty\}.
\label{eq2.1.2}
\end{equation}
\end{definition}

\begin{proposition}\label{pr2.2}
If $\nu$ is a locally finite Borel measure then
\begin{equation}
\D_\alpha(\nu)=\infty\mbox{ for }\alpha<\dim\nu\mbox{ and
}\D_\alpha(\nu)=0\mbox{ for }\alpha>\dim\nu. \label{eq2.2.1}
\end{equation}
In particular
\begin{equation}
\dim\nu=\inf\{\alpha\geq0 : \D_\alpha(\nu)=0\} \label{eq2.2.2}
\end{equation}
\end{proposition}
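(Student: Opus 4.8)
The plan is to reduce everything to the behavior in $\alpha$ of the single $R$-dependent quantity
$$S_\alpha(R):=\sup_{x\in\br^d}\frac{\nu(x+RQ)}{R^\alpha},$$
so that $\D_\alpha(\nu)=\limsup_{R\to\infty}S_\alpha(R)$. The decisive observation is that the numerator $\sup_x\nu(x+RQ)$ does not depend on $\alpha$; hence for $\beta>\alpha$ one has the exact factorization $S_\beta(R)=R^{-(\beta-\alpha)}S_\alpha(R)$ for every $R>0$. This single identity drives the whole argument.

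The key step I would establish first is the one-directional implication: if $\D_\alpha(\nu)<\infty$ then $\D_\beta(\nu)=0$ for every $\beta>\alpha$. Indeed, finiteness of the limsup yields some $R_0\geq 1$ with $S_\alpha(R)\leq \D_\alpha(\nu)+1$ for all $R\geq R_0$; multiplying by the decaying factor $R^{-(\beta-\alpha)}$ gives $S_\beta(R)\leq R^{-(\beta-\alpha)}(\D_\alpha(\nu)+1)\to 0$, whence $\D_\beta(\nu)=\limsup_R S_\beta(R)=0$. Taking contrapositives, this also shows that if $\D_\beta(\nu)>0$ then $\D_\alpha(\nu)=\infty$ for all $\alpha<\beta$; in particular the set $A:=\{\alpha\geq 0:\D_\alpha(\nu)=\infty\}$ is downward closed, an interval with supremum $\dim\nu$.

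From here \eqref{eq2.2.1} follows by two short case analyses. For $\alpha<\dim\nu=\sup A$, the number $\alpha$ is not an upper bound for $A$, so there is $\beta\in A$ with $\beta>\alpha$; downward closure gives $\alpha\in A$, i.e. $\D_\alpha(\nu)=\infty$. For $\alpha>\dim\nu$, choose an intermediate $\gamma$ with $\dim\nu<\gamma<\alpha$; since $\gamma>\sup A$ we have $\gamma\notin A$, i.e. $\D_\gamma(\nu)<\infty$, and then the key step applied at $\gamma$ with $\beta=\alpha$ gives $\D_\alpha(\nu)=0$.

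Finally, \eqref{eq2.2.2} is a formal consequence of \eqref{eq2.2.1}. Writing $B:=\{\alpha\geq 0:\D_\alpha(\nu)=0\}$, the second half of \eqref{eq2.2.1} shows $(\dim\nu,\infty)\subseteq B$, so $\inf B\leq\dim\nu$, while the first half shows $B\cap[0,\dim\nu)=\emptyset$, so $\inf B\geq\dim\nu$; hence $\inf B=\dim\nu$. The only point requiring care—and the one I would treat as the genuine crux—is the passage from finiteness of the limsup to the uniform eventual bound $S_\alpha(R)\leq \D_\alpha(\nu)+1$, since it is precisely this uniformity that lets the decaying factor $R^{-(\beta-\alpha)}$ annihilate the limsup at the larger exponent.
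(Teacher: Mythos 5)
Your proof is correct and rests on the same mechanism as the paper's: the factorization $\nu(x+RQ)/R^\beta=\bigl(\nu(x+RQ)/R^\alpha\bigr)\cdot R^{-(\beta-\alpha)}$ together with a comparison of $\D_\alpha$ at two exponents straddling $\dim\nu$. The only cosmetic difference is that you package the second comparison as a single key lemma ($\D_\alpha(\nu)<\infty$ forces $\D_\beta(\nu)=0$ for $\beta>\alpha$, via an eventual uniform bound on the supremum) and obtain the first half by contraposition, whereas the paper runs the two limsup estimates directly; the content is the same.
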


\begin{proof}
Take $\alpha<\dim\nu$. Then there exists $\alpha'>\alpha$ such
that $\D_{\alpha'}(\nu)=\infty$. Then
$$\D_\alpha(\nu)=\limsup_{R\rightarrow\infty}\sup_{x\in\br}\frac{\nu(x+RQ)}{R^\alpha}\geq \limsup_{R\rightarrow\infty}\sup_{x\in\br}\frac{\nu(x+RQ)}{R^{\alpha'}}=\infty.$$

If $\alpha>\dim\nu$ then take $\alpha'$ such that
$\dim\nu<\alpha'<\alpha$. From the definition of $\dim\nu$ we have
that $\D_{\alpha'}(\nu)<\infty$. Then
$$\D_\alpha(\nu)=\limsup_{R\rightarrow\infty}\sup_{x\in\br}\frac{\nu(x+RQ)}{R^\alpha}= \limsup_{R\rightarrow\infty}\sup_{x\in\br}\frac{\nu(x+RQ)}{R^{\alpha'}}\cdot\frac{R^{\alpha'}}{R^\alpha}$$$$\leq \limsup_{R\rightarrow\infty}\sup_{x\in\br}\frac{\nu(x+RQ)}{R^{\alpha'}}\cdot\limsup_{R\rightarrow\infty}\frac{R^{\alpha'}}{R^\alpha}=\D_{\alpha'}(\nu)\cdot0=0.$$
\end{proof}

\begin{proposition}\label{pr2.3}
The Beurling dimension can be computed by replacing the set $Q$ by
any set $O$ that is bounded and has an interior point.
\end{proposition}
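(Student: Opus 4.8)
The plan is to introduce the $\alpha$-upper Beurling density computed with $O$,
$$\D_\alpha^O(\nu):=\limsup_{R\to\infty}\sup_{x\in\br^d}\frac{\nu(x+RO)}{R^\alpha},$$
and to show that for every $\alpha\ge 0$ it is comparable to $\D_\alpha(\nu)$ up to a positive, finite multiplicative constant depending only on $O$ and $\alpha$. Once such a two-sided comparison is in hand, we get $\D_\alpha^O(\nu)=\infty$ if and only if $\D_\alpha(\nu)=\infty$, so the sets $\{\alpha:\D_\alpha^O(\nu)=\infty\}$ and $\{\alpha:\D_\alpha(\nu)=\infty\}$ coincide; taking suprema then shows that the Beurling dimension computed with $O$ equals the one computed with $Q$.

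The comparison comes from sandwiching $O$ between two scaled translates of the cube $Q$. Since $O$ is bounded there are $c\in\br^d$ and $s>0$ with $O\subseteq c+sQ$, and since $O$ has an interior point there is a ball $B(y_0,\rho)\subseteq O$ and hence a small cube $y_1+tQ\subseteq B(y_0,\rho)\subseteq O$ (take $t<2\rho/\sqrt d$ so that the cube of side $t$ centered at $y_0$ lies inside the ball). Thus
$$y_1+tQ\subseteq O\subseteq c+sQ,\qquad 0<t\le s.$$

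For the upper comparison I would use $O\subseteq c+sQ$: for each $R>0$ we have $\nu(x+RO)\le\nu\big((x+Rc)+RsQ\big)$, and taking the supremum over $x\in\br^d$ (the translation by $Rc$ is a bijection of $\br^d$, hence harmless) followed by the substitution $R'=Rs$ gives
$$\sup_{x}\frac{\nu(x+RO)}{R^\alpha}\le s^\alpha\sup_{x}\frac{\nu(x+R'Q)}{(R')^\alpha}.$$
Letting $R\to\infty$, so that $R'\to\infty$, yields $\D_\alpha^O(\nu)\le s^\alpha\,\D_\alpha(\nu)$. The lower comparison is symmetric, using $y_1+tQ\subseteq O$ together with the substitution $R''=Rt$, and gives $\D_\alpha^O(\nu)\ge t^\alpha\,\D_\alpha(\nu)$. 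Combining the two,
$$t^\alpha\,\D_\alpha(\nu)\le \D_\alpha^O(\nu)\le s^\alpha\,\D_\alpha(\nu).$$

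The argument is essentially routine; the only points that require care are (a) checking that the rescaling $R\mapsto Rs$ does not affect the $\limsup$, which holds because $R\mapsto Rs$ is an increasing bijection of $(0,\infty)$ with $Rs\to\infty$ as $R\to\infty$, and (b) the implicit assumption that $O$ is $\nu$-measurable (Borel), so that $\nu(x+RO)$ is defined and the sandwich inequalities above are meaningful. Since $t^\alpha$ and $s^\alpha$ are positive and finite for every $\alpha\ge 0$, the displayed inequalities immediately give $\D_\alpha^O(\nu)=\infty\iff\D_\alpha(\nu)=\infty$, and the equality of the two dimensions follows from the definition $\dim\nu=\sup\{\alpha\ge 0:\D_\alpha(\nu)=\infty\}$.
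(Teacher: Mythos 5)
Your proof is correct and follows essentially the same route as the paper: both arguments sandwich one set between two scaled translates of the other (you write $y_1+tQ\subseteq O\subseteq c+sQ$, the paper writes $a+r_0O\subset Q\subset b+R_0O$), use translation-invariance of the supremum over $x$ and the reparametrization $R\mapsto Rs$ to compare the two densities up to the constants $t^\alpha$ and $s^\alpha$, and conclude that $\D_\alpha^O(\nu)$ and $\D_\alpha(\nu)$ are simultaneously finite or infinite. Your explicit two-sided inequality $t^\alpha\,\D_\alpha(\nu)\le \D_\alpha^O(\nu)\le s^\alpha\,\D_\alpha(\nu)$ is a slightly cleaner packaging of the same idea, and your remarks on the $\limsup$ under rescaling and on Borel measurability of $O$ are appropriate.
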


\begin{proof}
There exist $a,b\in\br^d$ and $r_0,R_0>0$ such that
$$a+r_0O\subset Q\subset b+R_0O.$$
We have
$$\limsup_R\sup_x\frac{\nu(x+R(a+r_0O))}{R^\alpha}=\limsup_R\sup_y\frac{\nu(y+Rr_0O)}{R^\alpha}=\frac{1}{r_0^{-\alpha}}\limsup_{R'}\sup_y\frac{\nu(y+R'O)}{{R'}^\alpha}.$$
Therefore the two quantities involving $\limsup \sup\dots$ are
simultaneously 0 or $\infty$. Similarly
$$\limsup_R\sup_x\frac{\nu(x+R(b+R_0O))}{R^\alpha}=\frac{1}{R_0^{-\alpha}}\limsup_{R'}\sup_y\frac{\nu(y+R'O)}{{R'}^\alpha}.$$

Since we have
$$\limsup_R\sup_x\frac{\nu(x+R(a+r_0O))}{R^\alpha}\leq \limsup_R\sup_x\frac{\nu(x+RQ)}{R^\alpha}\leq \limsup_R\sup_x\frac{\nu(x+R(b+R_0O))}{R^\alpha}.$$
we see that the $\limsup\sup\dots$ involving $Q$ and the one
involving $O$ are at the same time $0$ or $\infty$. The result
then follows from Proposition \ref{pr2.2}.
\end{proof}

\begin{theorem}\label{th2.4}
Let $\nu$ be a locally finite measure and $\rho$ a Borel
probability measure. Then
$$\dim\nu*\rho=\dim\nu.$$
In other words, the Beurling dimension is invariant under
convolution with probability measures.
\end{theorem}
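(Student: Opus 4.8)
The plan is to compare the two Beurling densities $\D_\alpha(\nu)$ and $\D_\alpha(\nu*\rho)$ directly and show that one is finite exactly when the other is; by Definition \ref{def2.1} this immediately forces equality of the dimensions. The starting point is the identity, valid for any cube $x+RQ$,
$$(\nu*\rho)(x+RQ)=\iint \chi_{x+RQ}(s+t)\,d\nu(s)\,d\rho(t)=\int \nu\big((x-t)+RQ\big)\,d\rho(t),$$
which follows from the definition of convolution and Fubini (legitimate since $\nu$ is $\sigma$-finite by Proposition \ref{pr1.1} and $\rho$ is finite). From this the easy direction is immediate: since $\rho$ is a probability measure, $(\nu*\rho)(x+RQ)\le \sup_{y}\nu(y+RQ)$, and dividing by $R^\alpha$ and taking $\limsup_{R}\sup_x$ yields $\D_\alpha(\nu*\rho)\le \D_\alpha(\nu)$. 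In particular $\dim(\nu*\rho)\le\dim\nu$.

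For the reverse inequality I would exploit the fact that $\rho$ is a probability measure to localize its mass: fix $L>0$ with $\rho(B(0,L))\ge \tfrac12$. The geometric observation is that for every $t\in B(0,L)$ one has $y+RQ\subseteq (y-L\mathbf 1-t)+(R+2L)Q$, where $\mathbf 1=(1,\dots,1)$, because each coordinate of $t+L\mathbf 1$ lies in $[0,2L]$ and hence $RQ+(t+L\mathbf 1)\subseteq [0,R+2L)^d=(R+2L)Q$. Consequently $\nu\big((y-L\mathbf 1-t)+(R+2L)Q\big)\ge \nu(y+RQ)$ for all such $t$, and integrating against $\rho$ over $B(0,L)$ in the displayed identity (with $x=y-L\mathbf 1$ and $R$ replaced by $R+2L$) gives
$$(\nu*\rho)\big((y-L\mathbf 1)+(R+2L)Q\big)\ge \rho(B(0,L))\,\nu(y+RQ)\ge \tfrac12\,\nu(y+RQ).$$
Taking the supremum over $y$ I obtain $\sup_y\nu(y+RQ)\le 2\sup_z(\nu*\rho)\big(z+(R+2L)Q\big)$.

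Finally I divide by $R^\alpha$, write $\tfrac{1}{R^\alpha}=\tfrac{(R+2L)^\alpha}{R^\alpha}\cdot\tfrac{1}{(R+2L)^\alpha}$, and let $R\to\infty$; since $(R+2L)^\alpha/R^\alpha\to 1$ and $R+2L\to\infty$, taking $\limsup_R\sup$ gives $\D_\alpha(\nu)\le 2\,\D_\alpha(\nu*\rho)$. Combined with the easy direction, $\D_\alpha(\nu)=\infty$ if and only if $\D_\alpha(\nu*\rho)=\infty$, so by \eqref{eq2.1.2} the two dimensions coincide. The only genuine difficulty is the reverse inequality: because $\rho$ need not be compactly supported, one cannot simply dominate $\nu$ by $\nu*\rho$ on the \emph{same} cube, and the truncation to $B(0,L)$ together with the mild enlargement of the cube from $R$ to $R+2L$ is exactly what repairs this. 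The enlargement is harmless at the level of densities precisely because $(R+2L)^\alpha/R^\alpha\to1$, so neither the multiplicative constant $2$ nor the shift by $L\mathbf 1$ affects whether the relevant $\limsup$ is finite or infinite.
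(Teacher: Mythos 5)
Your proof is correct and follows essentially the same route as the paper: the convolution identity $(\nu*\rho)(x+RQ)=\int\nu(x+RQ-t)\,d\rho(t)$ yields $\D_\alpha(\nu*\rho)\le\D_\alpha(\nu)$ at once, and the reverse bound comes from localizing the mass of $\rho$ to a set of $\rho$-measure at least $\tfrac12$ and slightly enlarging the window so that the enlargement is invisible at the level of the density. The only differences are cosmetic: the paper works with balls (via Proposition \ref{pr2.3}) and lets the localization radius grow with $R$, doubling the ball and absorbing a harmless factor $2^\alpha$, whereas you fix the radius $L$ once and enlarge the cube to side $R+2L$; also, the $\sigma$-finiteness of $\nu$ you need for Fubini follows directly from the hypothesis that $\nu$ is locally finite, not from Proposition \ref{pr1.1}, which concerns Bessel measures.
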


\begin{proof}
Let $\alpha_0:=\dim\nu$. Take $\alpha>\alpha_0$. Then
$\D_\alpha(\nu)=0$. Therefore, for any $\epsilon>0$, there exist
$R$ as large as we want such that
$$\frac{\nu(x+RQ)}{R^\alpha}<\epsilon\mbox{ for all }x\in\br^d.$$
Then, for all $x\in\br^d$,
$$\nu*\rho(x+RQ)=\int\nu(x+RQ-t)\,d\rho(t)\leq \int \epsilon R^\alpha\,d\rho(t)=\epsilon R^\alpha.$$
This implies that
$$\sup_{x\in\br^d}\frac{\nu*\rho(x+RQ)}{R^\alpha}\leq \epsilon,$$
and since $R$ can be taken arbitrarily large, we obtain that
$\D_\alpha(\nu*\rho)=0$ so $\dim\nu*\rho\leq\alpha_0$.

Now take $\alpha<\alpha_0$. We have $\D_\alpha(\nu)=\infty$. We use
Proposition \ref{pr2.3} with $O$ the unit ball. Given $M>0$ there
exist $R$ as large as we want such that
$$\sup_{x\in\br^d}\frac{\nu(x+RO)}{R^\alpha}>M.$$
This means that there exists $a\in\br^d$ such that
$$\nu(a+RO)\geq MR^\alpha.$$

Since $\rho(\br^d)=1$ we can pick $R$ large enough so that
$\rho(RO)\geq\frac12$. Then for any $t\in\br^d$ with $|t|<R$ we
have $a-t+2RO\supset a+RO$, and we have
$$\nu*\rho(a+2RO)=\int\nu(a-t+2RO)\,d\rho(t)\geq \int_{|t|<R}\nu(a-t+2RO)\,d\rho(t)$$$$\geq \int_{|t|<R}\nu(a+RO)\,d\rho(t)\geq MR^\alpha\rho(RO)\geq \frac{M}{2}R^\alpha.$$
Therefore
$$\sup_{x\in\br^d}\frac{\nu*\rho(x+2RO)}{(2R)^\alpha}\geq \frac{M}{2\cdot 2^\alpha}.$$
Since $M$ is arbitrary, this implies that
$\D_\alpha(\nu*\rho)=\infty$. Hence $\dim\nu*\rho\geq\alpha_0$,
and the result follows.
\end{proof}

\begin{theorem}\label{th2.5}
Let $\nu$ be a locally finite Borel measure and $r>0$. For each
$k\in\bz^d$, let $x_k$ be a point in $r(k+Q)$. Define the measure
$$\nu'=\sum_{k\in\bz^d}\nu(r(k+Q))\delta_{x_k}.$$
Then
$$\dim\nu'=\dim\nu.$$
In other words, the Beurling dimension is invariant under
discretization.
\end{theorem}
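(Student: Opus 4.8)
The plan is to reduce the statement to a two-sided comparison between $\nu'(x+RQ)$ and the values of $\nu$ on slightly dilated cubes, and then feed that comparison into the definition of Beurling density. The crucial geometric point is that the cubes $C_k := r(k+Q)$, $k\in\bz^d$, tile $\br^d$, each has side length $r$, and each contains exactly the single atom $x_k$ carrying mass $\nu(C_k)$. Thus discretization only displaces mass by at most $r$ in each coordinate, so the two measures should agree to leading order on large cubes.

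First I would record the elementary comparison. Writing $x':=x-(r,\dots,r)$, I claim that for every $x\in\br^d$ and $R>0$,
$$\nu'(x+RQ)\le \nu\big(x'+(R+2r)Q\big)\quad\text{and}\quad \nu(x+RQ)\le \nu'\big(x'+(R+2r)Q\big).$$
For the first inequality, note that if $x_k\in x+RQ$ then, since $x_k\in C_k$ and $C_k$ has side $r$, a coordinatewise check shows $C_k\subseteq x'+(R+2r)Q$; as the $C_k$ are pairwise disjoint, summing the masses $\nu(C_k)$ gives $\nu$ of their disjoint union, which is at most $\nu(x'+(R+2r)Q)$. For the second inequality, every point of $x+RQ$ lies in some $C_k$, and any $C_k$ meeting $x+RQ$ has its atom $x_k$ inside $x'+(R+2r)Q$; covering $x+RQ$ by such cubes and using that distinct $k$ give distinct atoms of $\nu'$ yields the bound. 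This bookkeeping — pinning down the enlargement constant $2r$ and keeping the disjointness and covering arguments straight — is the main (if still elementary) obstacle. Along the way I would remark that $\nu'$ is locally finite, since any bounded set meets only finitely many $C_k$ and hence contains only finitely many atoms $x_k$, each of finite mass.

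Second, I would push these inequalities through the density. Taking suprema over $x$ (equivalently over $x'$, since $x\mapsto x'$ is a bijection of $\br^d$) and dividing by $R^\alpha$ gives
$$\sup_x\frac{\nu'(x+RQ)}{R^\alpha}\le \sup_y\frac{\nu(y+(R+2r)Q)}{R^\alpha}=\frac{(R+2r)^\alpha}{R^\alpha}\,\sup_y\frac{\nu(y+(R+2r)Q)}{(R+2r)^\alpha},$$
and symmetrically with the roles of $\nu$ and $\nu'$ interchanged. Since $(R+2r)^\alpha/R^\alpha\to 1$ as $R\to\infty$, taking $\limsup_{R\to\infty}$ and substituting $S=R+2r$ turns the right-hand side into $\D_\alpha(\nu)$; hence $\D_\alpha(\nu')\le\D_\alpha(\nu)$ and, by symmetry, $\D_\alpha(\nu')=\D_\alpha(\nu)$ for every $\alpha\ge 0$. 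The invariance $\dim\nu'=\dim\nu$ then follows at once from the definition $\dim\nu=\sup\{\alpha\ge 0:\D_\alpha(\nu)=\infty\}$. The only analytic subtlety is the fact that, for nonnegative $a_R$ and $b_R\to 1$, one has $\limsup_R a_Rb_R=\limsup_R a_R$, which absorbs the dilation factor cleanly.
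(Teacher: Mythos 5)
Your proof is correct and follows essentially the same route as the paper's: both rest on the two-sided comparison between $\nu'$ on a cube of side $R$ and $\nu$ (or $\nu'$) on an enlarged cube of side $R+2r$, together with the observation that $(R+2r)^\alpha/R^\alpha\to 1$. The only difference is organizational: you obtain the exact equality $\D_\alpha(\nu')=\D_\alpha(\nu)$ for every $\alpha\ge 0$ directly, whereas the paper argues via the dichotomy of Proposition \ref{pr2.2} (showing $\D_\alpha(\nu')=0$ for $\alpha>\dim\nu$ and $\D_\alpha(\nu')=\infty$ for $\alpha<\dim\nu$), so your version is marginally stronger but not a genuinely different argument.
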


\begin{proof}
Take $\alpha>\dim\nu$. Then, given $\epsilon>0$, for $R$ large
enough
$$\frac{\nu(x+RQ)}{R^\alpha}<\epsilon\mbox{ for all }x\in \br^d.$$

Take $x\in\br^d$ arbitrary. We have
\begin{equation}
\nu'(x+RQ)=\sum_{k : x_k\in
x+RQ}\nu(r(k+Q))=\nu\left(\bigcup_{x_k\in x+RQ}r(k+Q)\right).
\label{eq2.5.11}
\end{equation}
Since $x_k\in (x+RQ)\cap(r(k+Q))$ we see that the union of cubes
$r(k+Q)$ of side $r$ in \eqref{eq2.5.11} intersecting the cube
$x+RQ$ of side $R$ is contained in a cube of side $R+2r$, which we
call $x'+(R+2r)Q$. Then
$$\frac{\nu'(x+RQ)}{R^\alpha}\leq \frac{\nu(x'+(R+2r)Q)}{R^\alpha}=\frac{\nu(x'+(R+2r)Q)}{(R+2r)^\alpha}\cdot\frac{(R+2r)^\alpha}{R^\alpha}<\epsilon\cdot 2$$
for $R$ large enough (independent of $x$). Then
$$\sup_{x\in\br^d}\frac{\nu'(x+RQ)}{R^\alpha}\leq 2\epsilon$$
for $R$ large, so $\dim\nu'\leq \alpha$ and so
$\dim\nu'\leq\dim\nu$.

Now take $\alpha<\dim\nu$. Given $M>0$ we can find $R$ as large as
we want and $x\in\br^d$ such that
$$\frac{\nu(x+RQ)}{R^\alpha}\geq M.$$

The cube $x+RQ$ is contained in the union $U$ of the cubes
$r(k+Q)$ that intersect it. This union $U$ is contained in some
cube of side $R+2r$, say $x'+(R+2r)Q$.

We have
$$\nu'(x'+(R+2r)Q)=\sum_{x_k\in x'+(R+2r)Q}\nu(r(k+Q))=\nu\left(\bigcup_{x_k\in x'+(R+2r)Q}r(k+Q)\right)\geq $$
$$\nu\left(\bigcup_{r(k+Q)\subset x'+(R+2r)Q}r(k+Q)\right)\geq \nu(U)\geq \nu(x+RQ).$$
Then
$$\frac{\nu'(x+(R+2r)Q)}{(R+2r)^\alpha}\geq \frac{\nu(x+RQ)}{R^\alpha}\cdot\frac{R^\alpha}{(R+2r)^\alpha}\geq \frac M2.$$
This proves that
$$\sup_{x\in\br^d}\frac{\nu'(x+(R+2r))}{(R+2r)^\alpha}\geq \frac M2,$$
so $\D_\alpha(\nu')=\infty$ so $\dim\nu'\geq\dim\nu$
\end{proof}

We now establish some upper bounds on the Beurling dimension of Bessel measures.  The following result is true for any Borel measure $\mu$; subsequent results (e.g. Theorems \ref{th2.6} and \ref{th3.1}) refine this basic result based on whether $\mu$ has additional structure.

\begin{theorem} \label{th2.2}
If $\nu$ is a Bessel measure for a Borel measure $\mu$ on $\br^d$, then $\dim \nu \leq d$.
\end{theorem}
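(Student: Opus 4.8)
The plan is to derive the bound directly from the local growth estimate for Bessel measures already established in Proposition \ref{pr1.1}. That proposition guarantees a constant $C>0$ with $\nu(K)\leq C\max\{1,\diam(K)^d\}$ for every compact $K\subset\br^d$. Since the dimension is governed by the densities $\D_\alpha(\nu)$ via Proposition \ref{pr2.2}, it suffices to show that $\D_\alpha(\nu)=0$ for every $\alpha>d$; this immediately yields $\dim\nu\leq d$ through the characterization $\dim\nu=\inf\{\alpha\geq0:\D_\alpha(\nu)=0\}$.

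First I would apply the estimate of Proposition \ref{pr1.1} to the cubes appearing in the definition of $\D_\alpha(\nu)$. Because $Q=[0,1)^d$ is only half-open, I would pass to the closure and use monotonicity of $\nu$ to bound $\nu(x+RQ)\leq\nu(x+R\overline{Q})$, where $x+R\overline{Q}$ is compact with diameter exactly $R\sqrt d$. For $R$ large enough that $R\sqrt d\geq 1$, Proposition \ref{pr1.1} then gives $\nu(x+RQ)\leq C(R\sqrt d)^d=Cd^{d/2}R^d$, uniformly in $x\in\br^d$.

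Next, for any fixed $\alpha>d$ I would divide by $R^\alpha$ and take the supremum over $x$, obtaining $\sup_{x}\nu(x+RQ)/R^\alpha\leq Cd^{d/2}R^{d-\alpha}$. Since $d-\alpha<0$, the right-hand side tends to $0$ as $R\to\infty$, so $\D_\alpha(\nu)=0$. As this holds for every $\alpha>d$, the infimum characterization of the dimension forces $\dim\nu\leq d$.

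I do not anticipate any serious obstacle: the entire analytic content is packaged into Proposition \ref{pr1.1}, and the remaining argument is the elementary observation that a cube of side $R$ has diameter proportional to $R$, so the $d$-dimensional volume growth of the cube exactly matches the exponent $d$ in that proposition. The only point requiring minor care is the passage from the half-open cube $x+RQ$ to a compact set so that Proposition \ref{pr1.1} applies, which monotonicity of $\nu$ handles at once.
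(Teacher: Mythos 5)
Your argument is correct and is essentially the paper's own proof: both reduce the claim to the growth bound $\nu(K)\leq C\max\{1,\diam(K)^d\}$ of Proposition \ref{pr1.1} applied to closed cubes of side $R$, giving $\D_\alpha(\nu)=0$ for every $\alpha>d$ and hence $\dim\nu\leq d$. The only cosmetic difference is that you handle the half-open cube by monotonicity, whereas the paper invokes Proposition \ref{pr2.3} to work with $\overline{Q}$ directly.
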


\begin{proof}
We apply Proposition \ref{pr2.3} using the closed unit cube $\overline{Q}$ to estimate $\dim \nu$.  If $ \alpha > d$, we have
\begin{align*}
\D_{\alpha}(\nu) &= \limsup_{R \to \infty} \sup_{x \in \br^d} \frac{ \nu(x + R \overline{Q} ) }{R^{\alpha}} \\
&\leq \limsup_{R \to \infty} \frac{ C R^d  }{R^{\alpha}} \\
&= 0
\end{align*}
where the inequality follows from Proposition \ref{pr1.1}.
\end{proof}

\begin{definition}\label{def2.5}
We say that a Borel measure $\mu$ is {\it
ocasionally-$\alpha$-dimensional } if there exists a sequence of
Borel subsets $E_n$ and some constants $c_1,c_2>0$ such that
$\diam(E_n)$ decreases to $0$ as $n\rightarrow\infty$,
\begin{equation}
\sup_n\frac{\diam(E_n)}{\diam(E_{n+1})}<\infty \label{eq2.5.0}
\end{equation}
\begin{equation}
c_1\diam(E_n)^\alpha\leq \mu(E_n)\leq
c_2\diam(E_n)^\alpha,\quad(n\geq0). \label{eq2.5.1}
\end{equation}

\end{definition}

\begin{theorem}\label{th2.6}
Let $\mu$ be a occasionally-$\alpha$-dimensional measure and
suppose $\nu$ is a Bessel measure for $\mu$. Then
$\D_\alpha(\nu)<\infty$ and so $\dim(\nu)\leq \alpha$.
\end{theorem}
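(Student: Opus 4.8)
The plan is to test the Bessel inequality against modulated indicators of the sets $E_n$ and convert the resulting decay into an upper density bound. Fix $x_0\in\br^d$ and $n$, and set $g:=e_{x_0}\chi_{E_n}$, so that $\|g\|_{L^2(\mu)}^2=\mu(E_n)$ and $\widehat{g\,d\mu}(t)=\int_{E_n}e^{-2\pi i(t-x_0)\cdot y}\,d\mu(y)$. The point is that over the small set $E_n$ the exponential is nearly constant: choosing any $a_n\in E_n$ and using $|e^{i\theta}-1|\leq|\theta|$ gives
\[
\left|\widehat{g\,d\mu}(t)-e^{-2\pi i(t-x_0)\cdot a_n}\mu(E_n)\right|\leq 2\pi|t-x_0|\,\diam(E_n)\,\mu(E_n).
\]
Hence, writing $\rho_n:=\dfrac{1}{4\pi\diam(E_n)}$, we get $|\widehat{g\,d\mu}(t)|\geq\tfrac12\mu(E_n)$ whenever $|t-x_0|\leq\rho_n$. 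Since $\diam(E_n)\downarrow 0$, the radii $\rho_n$ increase to $\infty$.

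First I would feed this into the Bessel bound $B$: restricting the integral to the ball $B(x_0,\rho_n)$,
\[
B\,\mu(E_n)=B\|g\|_{L^2(\mu)}^2\geq\int_{B(x_0,\rho_n)}|\widehat{g\,d\mu}(t)|^2\,d\nu(t)\geq\tfrac14\mu(E_n)^2\,\nu(B(x_0,\rho_n)),
\]
so that $\nu(B(x_0,\rho_n))\leq 4B/\mu(E_n)$. Now the occasional-$\alpha$-dimensionality enters through $\mu(E_n)\geq c_1\diam(E_n)^\alpha$, which yields
\[
\nu(B(x_0,\rho_n))\leq\frac{4B}{c_1}\diam(E_n)^{-\alpha}=\frac{4B(4\pi)^\alpha}{c_1}\,\rho_n^\alpha=:C\rho_n^\alpha,
\]
a bound uniform in the center $x_0$ and valid along the whole sequence $\rho_n\to\infty$.

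The remaining issue — and the step I expect to be the real point — is that $\D_\alpha(\nu)$ is a $\limsup$ over all $R\to\infty$, whereas so far I only control the scales $R=\rho_n$. This is exactly where hypothesis \eqref{eq2.5.0} is used. Because $\sup_n\diam(E_n)/\diam(E_{n+1})=:K<\infty$, consecutive radii satisfy $\rho_{n+1}/\rho_n\leq K$, so the scales $\rho_n$ have multiplicatively bounded gaps. Given large $R$, I pick the largest $n$ with $\rho_n\leq R$; then $R<\rho_{n+1}\leq K\rho_n$, so a cube $x+RQ$ of side $R<K\rho_n$ can be covered by at most $C_dK^d$ balls of radius $\rho_n$ (a number depending only on $d$ and $K$). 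Summing the uniform ball bound over this bounded cover and using $\rho_n\leq R$ gives
\[
\nu(x+RQ)\leq C_dK^d\cdot C\rho_n^\alpha\leq C_dK^dC\,R^\alpha
\]
for every $x$ and all large $R$. Taking $\sup_x$ and then $\limsup_{R\to\infty}$ shows $\D_\alpha(\nu)\leq C_dK^dC<\infty$. Finally, $\dim\nu\leq\alpha$ follows from Proposition \ref{pr2.2}, since $\dim\nu>\alpha$ would force $\D_\alpha(\nu)=\infty$.
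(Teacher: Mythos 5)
Your proof is correct and follows essentially the same route as the paper's: both test the Bessel bound against modulated (normalized) indicators of the sets $E_n$, obtain a uniform estimate $\nu(B(x_0,\rho_n))\lesssim \rho_n^\alpha$ at the scales $\rho_n\sim 1/\diam(E_n)$, and then use the bounded-ratio hypothesis \eqref{eq2.5.0} to interpolate to arbitrary $R$. The only cosmetic differences are that you use the Lipschitz bound $|e^{i\theta}-1|\le|\theta|$ in place of the paper's choice of $\delta$, and you cover the cube by balls explicitly rather than invoking Proposition \ref{pr2.3}.
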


\begin{proof}
Let $E_n$ be a sequence of sets as in Definition \ref{def2.5}. Let
$\epsilon_n:=\diam(E_n)$ and pick $a_n\in E_n$. Choose $\delta>0$
such that
$$|e^{2\pi iy}-1|\leq \frac 12\mbox{ if }|y|\leq \delta.$$

Let $f_n:=\chi_{E_n}$. We have, for $|t|\leq
\frac{\delta}{\epsilon_n}$,
$$\left|\widehat{f_n\,d\mu}(t)-e^{2\pi i t\cdot a_n}\widehat{f_n\,d\mu}(0)\right|=\left|\int_{E_n}(e^{-2\pi it\cdot x}-e^{2\pi it\cdot a_n})\,d\mu(x) \right|
\leq \int_{E_n}|e^{2\pi it\cdot a_n}(e^{-2\pi i
t\cdot(x-a_n)}-1)|\,d\mu(x)$$$$\leq
\int_{E_n}\frac12\,d\mu=\frac12\mu(E_n),$$ since
$|x-a_n|<\diam(E_n)=\epsilon_n$ for $x\in E_n$ so
$|t\cdot(x-a_n)|\leq \frac{\delta}{\epsilon_n}\epsilon_n=\delta$.

Also
$$|e^{2\pi it\cdot a_n}\widehat{f_n\,d\mu}(0)|=\mu(E_n).$$
Then
$$|\widehat{f_n\,d\mu}(t)|\geq |e^{2\pi it\cdot a_n}\widehat{f_n\,d\mu}(0)|-\left|\widehat{f_n\,d\mu}(t)-e^{2\pi i t\cdot a_n}\widehat{f_n\,d\mu}(0)\right|\geq \frac12\mu(E_n)\geq\frac{c_1}{2}\epsilon_n^\alpha,$$
for all $|t|\leq \frac{\delta}{\epsilon_n}$.

Let $g_n:=\frac{f_n}{\mu(E_n)^{1/2}}$. Then
$\|g_n\|_{L^2(\mu_n)}=1$ and
$$|\widehat{g_n\,d\mu}(t)|^2\geq \frac{c_1^2}{4}\epsilon_n^{2\alpha}\cdot\frac{1}{\mu(E_n)}\geq \frac{c_1^2\epsilon_n^{2\alpha}}{4c_2\epsilon_n^\alpha}=:C\epsilon_n^\alpha,$$
for all $|t|\leq \frac{\delta}{\epsilon_n}$, and $C>0$.

Apply the Bessel inequality to the function $e^{2\pi i a\cdot
x}g_n(x)$: for all $a\in\br^d$:
$$B\geq \int |\widehat{e_ag_n\,d\mu}|^2\,d\nu\geq \int_{B(a,\frac{\delta}{\epsilon_n})}|\widehat{g_n\,d\mu}(t-a)|^2\,d\nu(t)\geq C\epsilon_n^\alpha\nu(B(a,\frac{\delta}{\epsilon_n})).$$
Then
$$\frac{1}{\delta^\alpha}\frac{B}{C}\geq \frac{\nu(B(a,\frac{\delta}{\epsilon_n}))}{\left(\frac{\delta}{\epsilon_n}\right)^\alpha}.$$

Now take $M\geq \epsilon_n/\epsilon_{n+1}$ for all $n$ (according
to \eqref{eq2.5.0}), and pick $R>0$ large. Let $n$ be such that
$\frac{\delta}{\epsilon_n}\leq R\leq
\frac{\delta}{\epsilon_{n+1}}$.

We have

$$\sup_{a\in\br^d}\frac{\nu(B(a,R))}{R^\alpha}\leq \sup_a\frac{\nu(B(a,\frac{\delta}{\epsilon_{n+1}}))}{\left(\frac{\delta}{\epsilon_n}\right)^\alpha}=
\sup_a\frac{\nu(B(a,\frac{\delta}{\epsilon_{n+1}}))}{\left(\frac{\delta}{\epsilon_{n+1}}\right)^\alpha}\cdot\frac{\epsilon_n^\alpha}{\epsilon_{n+1}^\alpha}\leq
\frac{1}{\delta^\alpha}\frac{B}{C} M^\alpha.$$

This shows that $\D_\alpha(\nu)<\infty$ so $\dim\nu\leq \alpha$.
\end{proof}

\begin{definition}\label{def2.7}
Let $\nu$ be a locally finite Borel measure on $\br^d$. The {\it
$d$-lower Beurling density} (or simply {\it the lower Beurling
density}) of $\nu$ is defined by
\begin{equation}
\D^-(\nu)=\liminf_{R\rightarrow\infty}\inf_{x\in\br^d}\frac{\nu(x+RQ)}{R^d}.
\label{eq2.7.1}
\end{equation}
\end{definition}

\begin{theorem}\label{th2.8}
Let $\mu$ be a compactly supported Borel probability measure on
$\br^d$. Suppose $\mu$ has a Bessel measure $\nu$ of positive
lower Beurling density. Then $\mu$ is absolutely continuous with
respect to the Lebesgue measure, and its Radon-Nikodym derivative
is in $L^2(\br^d)$.
\end{theorem}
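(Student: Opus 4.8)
The plan is to reduce everything to a single statement: that the Fourier transform $\widehat{d\mu}$ lies in $L^2(\br^d)$ with respect to Lebesgue measure $\lambda$. Once this is shown, the conclusion is a standard fact of Fourier analysis: if $\widehat{d\mu}\in L^2(\br^d)$, let $g_0\in L^2(\br^d)$ be its inverse $L^2$-Fourier transform; then the finite measures $\mu$ and $g_0\,d\lambda$ are tempered distributions with the same Fourier transform, so by injectivity of the Fourier transform on $\mathcal{S}'$ they coincide, giving $d\mu/d\lambda=g_0\in L^2(\br^d)$.

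To produce the $L^2$-bound I would first apply the Bessel inequality to the modulated constants $e_a$, $a\in\br^d$. Since $\mu$ is a probability measure, $\|e_a\|_{L^2(\mu)}^2=1$, and as in the proof of Proposition \ref{pr1.1} one has $\widehat{e_a\,d\mu}(x)=\widehat{d\mu}(x-a)$. Hence the Bessel bound $B$ gives
\[ \int |\widehat{d\mu}(x-a)|^2\,d\nu(x)\leq B\qquad\text{for every }a\in\br^d. \]
Writing $g:=|\widehat{d\mu}|^2$, which is a nonnegative, continuous, even function, and $G(a):=\int g(x-a)\,d\nu(x)$, this is simply the pointwise bound $G(a)\leq B$ on all of $\br^d$.

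The heart of the argument is an averaging step against Lebesgue measure. Let $C_L:=[-L/2,L/2)^d$ be the centered cube of side $L$, so $|C_L|=L^d$. Integrating $G(a)\leq B$ over $a\in C_R$ and applying Tonelli's theorem (valid since the integrand is nonnegative),
\[ B R^d\geq \int_{C_R}G(a)\,da=\int\left(\int_{x-C_R}g(w)\,dw\right)d\nu(x). \]
The key geometric observation is that for $x\in C_{R/2}$ one has $x-C_R\supseteq C_{R/2}$, so the inner Lebesgue integral is at least $J_R:=\int_{C_{R/2}}g\,d\lambda$. Restricting the outer integral to $x\in C_{R/2}$ then yields $J_R\cdot\nu(C_{R/2})\leq BR^d$.

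Finally I would bring in the positive lower Beurling density. Since the infimum defining $\D^-(\nu)$ ranges over all translates, it is unchanged if computed with centered cubes, so $\D^-(\nu)>0$ provides a constant $c>0$ with $\nu(C_{R/2})\geq c(R/2)^d$ for all large $R$. Substituting gives $J_R\leq 2^dB/c$, a bound independent of $R$; letting $R\to\infty$ and using monotone convergence ($C_{R/2}\uparrow\br^d$) yields $\int_{\br^d}|\widehat{d\mu}|^2\,d\lambda\leq 2^dB/c<\infty$, which is the desired membership $\widehat{d\mu}\in L^2(\br^d)$. I expect the only delicate point to be this averaging/scale-matching: one must pair the $\nu$-mass of a \emph{large} cube with a fixed-scale tail of $\int g$, and the containment $x-C_R\supseteq C_{R/2}$ is exactly what lets the $R^d$ on the right cancel against the density lower bound while $C_{R/2}$ exhausts $\br^d$. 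Centering the cubes (rather than using $[0,R)^d$) is what makes this containment symmetric and the exhaustion complete.
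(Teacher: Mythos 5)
Your proof is correct, but it takes a genuinely different route from the paper's. The paper first converts the density hypothesis into a fixed scale $R$ with $\nu(x+RQ)\geq C>0$ for all $x$, then invokes the discretization theorem (Theorem \ref{th1.10}, which itself rests on the Taylor-expansion stability estimate of Theorem \ref{th1.9}) to produce an atomic Bessel measure on $R\bz^d$, convolves it with $\frac{1}{R^d}\chi_{RQ}\,dx$ via Proposition \ref{pr1.2} to obtain an absolutely continuous Bessel measure $H(x)\,dx$ with $H\geq C/R^d$ everywhere, and finally applies the Bessel bound to the single function $f=1$ to get $\int|\hat\mu|^2\leq R^dB/C$. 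You instead apply the Bessel bound to the whole family of modulations $e_a$, average the resulting uniform bound $\int|\widehat{d\mu}(x-a)|^2\,d\nu(x)\leq B$ over $a$ in a large centered cube, swap the order of integration by Tonelli, and use the containment $x-C_R\supseteq C_{R/2}$ for $x\in C_{R/2}$ to pit $\nu(C_{R/2})\gtrsim R^d$ against the $BR^d$ on the other side; the scales cancel and $C_{R/2}$ exhausts $\br^d$. What your approach buys is self-containedness and slightly greater generality: it uses nothing beyond the definition of a Bessel measure (no Theorem \ref{th1.9}/\ref{th1.10} machinery), and it nowhere uses compact support of $\mu$ --- finiteness suffices, both for $e_a\in L^2(\mu)$ and for the final identification of $\mu$ with $g_0\,d\lambda$ as tempered distributions (which is the same closing step as the paper's Parseval argument against $C_c^\infty$ test functions). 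What the paper's approach buys is brevity given the machinery already developed, plus the reusable intermediate fact that $\mu$ then admits an absolutely continuous Bessel measure with density bounded below. Both arguments are sound.
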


\begin{proof}
Since $\nu$ has positive lower Beurling density, there exists
$R>0$ and $\delta>0$ such that
$$\frac{\nu(x+RQ)}{R^d}\geq \delta\mbox{ for all }x\in\br^d.$$
Then
$$\nu(x+RQ)\geq \delta R^d=:C>0,\mbox{ for all }x\in\br^d.$$

With Theorem \ref{th1.10}, the measure
$\nu'=\sum_{k\in\bz^d}\nu(R(k+Q))\delta_{Rk}$ is also a Bessel
measure for $\mu$. Note that $\nu(R(k+Q))\geq C$ for all
$k\in\bz$. Using Proposition \ref{pr1.2}, the convolution of
$\nu'$ with the probability measure
$\frac{1}{R^d}\chi_{RQ}(x)\,dx$, we obtain that the measure
$$\nu''=\frac{1}{R^d}\sum_{k\in\bz^d}\nu(R(k+Q))\chi_{Rk+RQ}(x)\,dx=:H(x)\,dx.$$
is a Bessel measure for $\mu$. But the previous remarks show that
$H(x)\geq \frac{1}{R^d}C$ for all $x\in\br^d$. Therefore we have
$$\int |\widehat\mu(x)|^2\,dx\leq \frac{R^d}{C}\int |\widehat\mu(x)|^2H(x)\,dx\leq \frac{R^d}{C}B,$$
where $B$ is the Bessel bound for the Bessel measure $H(x)\,dx$.

This means that $\hat\mu$ is in $L^2(\br^d)$. Then there exists
some function $g\in L^2(\br^d)$ whose Fourier transform $\hat g$
is $\hat\mu$.

Take $f$ an arbitrary $C^\infty$, compactly supported function on
$\br^d$. Then $\hat f$ is in $L^1(\br^d)\cap L^2(\br^d)$ and using
the Parseval relation (see e.g., \cite[Chapter VI.2]{Kat04}), we
have
$$\int f\,d\mu=\int \widehat f(\xi)\widehat\mu(-\xi)\,d\xi=\int \widehat f(\xi)\cj{\widehat\mu(\xi)}\,d\xi=\int\widehat f(\xi)\cj{\hat g(\xi)}\,d\xi=\int f(x)\cj{g(x)}\,dx.$$
Since $f$ is arbitrary, it follows that $d\mu=\cj g\,dx$. Note
that in particular this implies that $g$ is non-negative.
\end{proof}

For Lebesgue measure restricted to the unit interval, a necessary condition for a frame measure is that the Beurling dimension is 1.  We will prove this using a result in \cite{OSANN} concerning equivalent norms for the Paley-Wiener space.  Recall that the Paley Wiener space $PW_{\pi}$ is the collection of entire functions which are of exponential type $\pi$ and are square integrable on the real axis.  By the Paley-Wiener theorem \cite{PW34a}, this consists of all functions $f$ such that $f(z) = \int_{-\pi}^{\pi} g(t) e^{i t z} dt$ for some $g \in L^2([-\pi, \pi])$.  By reparametrizing, we consider $f(z) = \int_{-1/2}^{1/2} g(t) e^{-2 \pi i t z } dt$ for some $g \in L^2([-1/2,1/2])$.  Landau's inequality \cite{Lan67a} implies that if $\{ \lambda_{k} \}$ is a sequence such that there exists constants $0 < A,B < \infty$ such that for every $f \in PW_{\pi}$,
\begin{equation} \label{Eq:sampling}
 A \int_{\br} |f(t)|^2 dt \leq \sum_{k \in \bz} | f(\lambda_{k})|^2 \leq B \int_{\br} |f(t)|^2 dt
\end{equation}
then the Beurling density of $\{\lambda_{k}\}$ satisfies the inequalities
\begin{equation} \label{Eq:beurlingd}
1 \leq \mathcal{D}^{-} (\{ \lambda_{k} \} ) \leq \mathcal{D}^{+} ( \{ \lambda_{k} \} ) < \infty.
\end{equation}

If Equation \eqref{Eq:sampling} is satisfied, then $\{ \lambda_{k} \}$ is a sampling set for $PW_{\pi}$, and is equivalent to $\{ e_{\lambda_{k}} \} \subset L^2([-1/2, 1/2])$ being a Fourier frame.  From page 798 of \cite{OSANN}: for $r, \delta > 0$ and a measure $\nu$, we define the sequence
\[ \Lambda_{\nu}(r, \delta) = \{ kr : k \in \bz, \ \nu([kr, (k+1)r)) \geq \delta \}. \]

\begin{proposition}
A Borel measure $\nu$ yields an equivalent norm for $PW_{\pi}$, i.e., the measure $\nu$ is a frame measure for the Lebesgue measure on $[-1/2,1/2]$, if and only if the following hold:
\begin{enumerate}
\item There exists a positive constant $C$ such that $\nu([x, x+1)) \leq C$ for all $x \in \br$.
\item For all sufficiently small $r > 0$ there exists a $\delta = \delta(r) > 0$ such that $\Lambda_{\nu}(r, \delta)$ is a sampling set for $PW_{\pi}$.
\end{enumerate}
\end{proposition}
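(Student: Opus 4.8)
My plan is to recast the statement entirely inside $PW_\pi$ and read off the two conditions from the structural results of Section~2. Let $\mu$ be Lebesgue measure on $[-1/2,1/2]$. For $f\in L^2(\mu)$ the function $Uf:=\widehat{f\,d\mu}$ is precisely the element $F(z)=\int_{-1/2}^{1/2}f(t)e^{-2\pi i tz}\,dt$ of $PW_\pi$, and by the classical Plancherel theorem $U$ is a unitary isomorphism of $L^2(\mu)$ onto $PW_\pi$. Hence the frame inequalities $A\|f\|_{L^2(\mu)}^2\le\int_\br|\widehat{f\,d\mu}|^2\,d\nu\le B\|f\|_{L^2(\mu)}^2$ translate, for all $F\in PW_\pi$, into $A\int_\br|F|^2\,dt\le\int_\br|F|^2\,d\nu\le B\int_\br|F|^2\,dt$; that is, $\nu$ is a frame measure for $\mu$ if and only if $\nu$ yields an equivalent norm for $PW_\pi$. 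The proposition is thus the assertion that this equivalent-norm property is characterized by (1) and (2), which is the substance of the cited theorem of \cite{OSANN}. Below I indicate how each implication can be matched to the machinery already in hand.

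\textbf{Frame measure $\Rightarrow$ (1), (2).} A frame measure is in particular Bessel, so Proposition \ref{pr1.1} with $d=1$ gives $\nu([x,x+1))\le C$ for all $x$, which is (1). For (2) I would fix a small $r$ and discretize: by Theorem \ref{th1.10} the atomic measure $\nu'_r=\sum_k\nu([kr,(k+1)r))\,\delta_{kr}$ is again a frame measure, so in $PW_\pi$ one has $A_r\|F\|^2\le\sum_k w_k|F(kr)|^2\le B_r\|F\|^2$ with $w_k:=\nu([kr,(k+1)r))\le C$. Splitting the middle sum into cells with $w_k\ge\delta$ and cells with $w_k<\delta$, and bounding the latter by $\delta\sum_k|F(kr)|^2\le\delta(C'/r)\|F\|^2$ through the Plancherel--P\'olya inequality on $r\bz$, a choice of $\delta=\delta(r)$ proportional to $r$ forces the light cells to absorb at most half of $A_r\|F\|^2$. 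Since $w_k\le C$ on the heavy cells, this yields $\sum_{\lambda\in\Lambda_\nu(r,\delta)}|F(\lambda)|^2\ge(A_r/2C)\|F\|^2$, while the upper sampling bound is automatic from Plancherel--P\'olya; so $\Lambda_\nu(r,\delta)$ is a sampling set.

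\textbf{(1), (2) $\Rightarrow$ frame measure.} Condition (1) says exactly that $\nu$ is a Carleson measure for $PW_\pi$: estimating $\int_{[n,n+1)}|F|^2\,d\nu\le\nu([n,n+1))\sup_{[n,n+1)}|F|^2$ and controlling the pointwise supremum by the local $L^2$-norm (again Plancherel--P\'olya) gives the upper bound $B$. For the lower bound I would use that on the heavy cells $\nu'_r\ge\delta\sum_{\lambda\in\Lambda_\nu(r,\delta)}\delta_\lambda$, so the sampling inequality gives $\int_\br|F|^2\,d\nu'_r\ge\delta A\|F\|^2$. Writing $\int|F|^2\,d\nu'_r=\int|\widehat{f\,d\mu}(x+\epsilon(x))|^2\,d\nu(x)$ with $\epsilon(x)=kr-x$ on $[kr,(k+1)r)$, the stability estimate from the proof of Theorem \ref{th1.9} (which needs only the Bessel bound supplied by (1)) gives, by Minkowski's inequality, $\big(\int|F|^2\,d\nu\big)^{1/2}\ge\big((\delta A)^{1/2}-\eta(r)\big)\|F\|$, where $\eta(r)^2=B(e^{r^2}-1)(e^{\pi^2}-1)\to0$ as $r\to0$.

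\textbf{The main obstacle.} The crux is this last inequality: one must have $(\delta(r)A(r))^{1/2}>\eta(r)$, i.e. the threshold $\delta(r)$ and the sampling constant $A(r)$ furnished by (2) must not decay faster than the stability error $\eta(r)\sim\sqrt B\,r$. Balancing these is precisely where the quantitative density/sampling theory of \cite{OSANN} is needed. A self-contained route would exploit a halving step: since splitting $[kr,(k+1)r)$ in two shows $\Lambda_\nu(r/2,\delta/2)$ lies within $r/2$ of $\Lambda_\nu(r,\delta)$, stability of sampling sets under small perturbations should let one arrange $\delta(r)\gtrsim r$ along a sequence $r\to0$, whence $(\delta A)^{1/2}\sim\sqrt r\gg r\sim\eta(r)$ and the lower bound follows. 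Verifying that the sampling constant survives these perturbations is the one genuinely technical point, and for it the cleanest course is to invoke \cite{OSANN} directly.
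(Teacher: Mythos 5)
The paper offers no proof of this proposition: it is quoted verbatim from page 798 of \cite{OSANN}, so there is no in-paper argument to compare yours against. Judged on its own merits, your reduction of the frame-measure condition to the equivalent-norm property on $PW_\pi$ via the unitary $f\mapsto\widehat{f\,d\mu}$ is correct, and the forward implication is sound: Proposition \ref{pr1.1} gives (i), and your discretize-then-split argument (Theorem \ref{th1.10}, Plancherel--P\'olya with Bessel bound $O(1/r)$ for $\{e_{kr}\}$ on $[-1/2,1/2]$, and the choice $\delta(r)\sim A_r r$) does establish (ii). The Carleson-type upper bound from (i) alone is also standard and correct.

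The genuine gap is where you say it is, and it is not closed by the halving argument you sketch. Condition (ii) is purely qualitative: it supplies some $\delta(r)>0$ and some sampling constant $A_{\mathrm{samp}}(r)>0$ for each small $r$, with no lower bound on either in terms of $r$, so the required inequality $(\delta(r)A_{\mathrm{samp}}(r))^{1/2}>\eta(r)\sim\sqrt{B}\,r$ cannot be forced. The halving step does give $\delta(r/2^n)\geq\delta(r)/2^n$, and $(\delta(r)A/2^n)^{1/2}$ would indeed eventually dominate $\eta(r/2^n)\sim r/2^n$ \emph{provided} the sampling constants of $\Lambda_\nu(r/2^n,\delta(r)/2^n)$ stay bounded below; but the perturbed copy of $\Lambda_\nu(r,\delta)$ sitting inside the refined set is displaced by as much as $r$, while the perturbation tolerance furnished by Theorem \ref{th1.9} depends on the frame bounds of $\Lambda_\nu(r,\delta)$ themselves --- quantities that condition (ii) does not relate to $r$. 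This is circular: you cannot certify $r$ is below the tolerance without the very quantitative control you are trying to derive. That quantitative sampling theory is precisely the content of the Ortega-Cerd\`a--Seip theorem, so in the end you, like the paper, must cite \cite{OSANN}; the difference is only that you localize the citation to this one step rather than to the whole statement.
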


\begin{theorem}
If $\mu = \lambda|_{[-\frac{1}{2}, \frac{1}{2}]}$, and if $\nu$ is a frame measure for $\mu$, then $\mathcal{D}^{-}(\nu) > 0$, and hence $\dim(\nu) = 1$.
\end{theorem}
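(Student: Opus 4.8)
The plan is to combine the characterization in the preceding Proposition with Landau's density theorem, and then convert the resulting statement about the discrete sampling sequence into the desired lower bound on the Beurling density of $\nu$.

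First, since $\nu$ is a frame measure for $\mu=\lambda|_{[-1/2,1/2]}$, the preceding Proposition guarantees that for all sufficiently small $r>0$ there is a $\delta=\delta(r)>0$ such that $\Lambda_{\nu}(r,\delta)=\{kr:\nu([kr,(k+1)r))\geq\delta\}$ is a sampling set for $PW_{\pi}$. I would fix one such pair $(r,\delta)$. Being a sampling set means the two-sided inequality \eqref{Eq:sampling} holds for $\Lambda_{\nu}(r,\delta)$, so Landau's inequality \eqref{Eq:beurlingd} yields $\D^{-}(\Lambda_{\nu}(r,\delta))\geq 1$.

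The main step is to pass from this counting density of $\Lambda_{\nu}(r,\delta)$ to a mass density for $\nu$. Fix $\epsilon>0$. By the definition of the lower Beurling density of the sequence, there is $R_{0}$ so that for every $R\geq R_{0}$ and every $x\in\br$ the interval $[x,x+R]$ contains at least $(1-\epsilon)R$ points of $\Lambda_{\nu}(r,\delta)$. Each such point $kr$ carries the estimate $\nu([kr,(k+1)r))\geq\delta$, the intervals $[kr,(k+1)r)$ are pairwise disjoint, and whenever $kr\in[x,x+R]$ we have $[kr,(k+1)r)\subseteq[x,x+R+r)$. Summing over these points gives $\nu([x,x+R+r))\geq(1-\epsilon)R\delta$. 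Dividing by the length $R+r$ and letting $R\to\infty$ shows $\D^{-}(\nu)\geq(1-\epsilon)\delta$; since $\epsilon>0$ is arbitrary, $\D^{-}(\nu)\geq\delta>0$.

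Finally, the positivity of $\D^{-}(\nu)$ pins down the dimension. On one hand, $\D^{-}(\nu)\geq c>0$ means $\nu([x,x+R))\geq cR$ for all large $R$ and all $x$, so for each $\alpha<1$ we get $\D_{\alpha}(\nu)\geq\limsup_{R}cR^{1-\alpha}=\infty$, whence $\dim\nu\geq 1$. On the other hand, a frame measure is in particular a Bessel measure, so Theorem \ref{th2.2} gives $\dim\nu\leq d=1$. Combining the two inequalities yields $\dim\nu=1$. The only delicate point in the whole argument is the conversion in the third paragraph: one must keep track of the fact that the points of $\Lambda_{\nu}(r,\delta)$ sit on the lattice $r\bz$, that the associated intervals are disjoint, and that the length overshoot $r$ is negligible as $R\to\infty$; everything else is a direct citation of the quoted results.
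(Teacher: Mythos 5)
Your proof is correct and follows essentially the same route as the paper's: both invoke the preceding Proposition to obtain a sampling set $\Lambda_{\nu}(r,\delta)$, apply Landau's inequality to bound its lower counting density below by $1$, and then convert that count into a mass bound for $\nu$ by summing the $\delta$-masses of the pairwise disjoint intervals $[kr,(k+1)r)$. The only cosmetic difference is that you enlarge the window by $r$ whereas the paper discards at most one boundary point, and your closing deduction that $\dim\nu=1$ (lower density positive forces $\D_{\alpha}(\nu)=\infty$ for $\alpha<1$, while Theorem \ref{th2.2} gives $\dim\nu\leq 1$) is the same as, and slightly more explicit than, the paper's.
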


\begin{proof}
The measure $\nu$ is a frame measure for $\mu$ if and only if for every $f$ in the Paley-Wiener space $PW_{\pi}$ we have
\[ A \int |f(t)|^2 d \lambda(t) \leq \int |f(t)|^2 d \nu(t) \leq B \int |f(t)|^2 d \lambda(t), \]
and thus $\nu$ yields an equivalent norm for $PW_{\pi}$.  By the preceding Proposition we have that there exists an $r > 0$ and a $\delta > 0$ such that the sequence $\Lambda_{\nu}(\delta, r)$ is a sampling sequence for $PW$.

For $x,\ R \in \br$, we define the set
\[ W_{r}(x,R) = \{ kr : k \in \bz; \ \nu([kr, (k+1)r)) \geq \delta; \ [kr, (k+1)r) \subset x + RQ \}. \]
Note that $W_{r}(x,R) \subset \Lambda_{\nu}(\delta, r) \cap (x + RQ)$ and moreover the latter set is larger than the former by at most $1$ element.  Thus, we have for every $x\in\br$ and $R>0$
\[
\nu(x + RQ) \geq \sum_{kr \in W_{r}(x,R)} \nu([kr, (k+1)r)) \geq \delta \cdot \# W_{r}(x,R)
 \geq \delta \cdot \left(\# (\Lambda_{\nu}(\delta, r) \cap (x + RQ)) - 1 \right)
\]
Therefore, we have that
\begin{align*}
\mathcal{D}^{-}(\nu) &= \liminf_{R\rightarrow\infty}\inf_{x\in\br}\frac{\nu(x+RQ)}{R} \\
&\geq \delta \liminf_{R\rightarrow\infty}\inf_{x\in\br}\frac{\# (\Lambda_{\nu}(\delta, r) \cap (x + RQ)) - 1}{R} \\
&= \delta \mathcal{D}^{-}( (\Lambda_{\nu}(\delta, r) ) \\
&> 0
\end{align*}
by Equation (\ref{Eq:beurlingd}).  We have that $\mathcal{D}^{-}(\nu) \leq \mathcal{D}_{1}(\nu) < \infty$ by Theorem \ref{th2.2}.
\end{proof}

\section{Affine iterated function systems}

Our motivating examples, measures restricted to Cantor type sets, are invariant measures for iterated function systems.  For all such measures, we obtain an upper bound on the Beurling dimension of any Bessel measure.  For some such measures, as demonstrated in Theorems \ref{th3.4} and \ref{th3.6}, the structure of the iterated function system provides a way of constructing frame measures.  Specifically, the measure $\mu_{4}$ of \cite{JP98} is one such case; unfortunately, the measure $\mu_{3}$ for the usual Cantor set is not one of those cases.

\begin{definition}\label{def3.1}
Let $R$ be a $d\times d$ expanding integer matrix, i.e., all
eigenvalues $\lambda$ satisfy $|\lambda|>1$. Let $B$ be a finite
subset of $\bz^d$ of cardinality $\#B=:N$. We consider the
following {\it affine iterated function system}:
\begin{equation}
\tau_b(x)=R^{-1}(x+b),\quad(x\in\br^d,b\in B).
    \label{eq1_1}
\end{equation}

Since $R$ is expanding, the maps $\tau_b$ are contractions (in an
appropriate metric equivalent to the Euclidean one), and therefore
Hutchinson's theorem can be applied \cite{Hut81}: there exists a
unique compact set $X=X_B\subset\br^d$ such that
\begin{equation}
    X=\bigcup_{b\in B}\tau_b(X).
    \label{eq1_2}
\end{equation}

The set $X_B$ is called the {\it attractor} of the affine IFS.
    Moreover
\begin{equation}
    X_B=\left\{\sum_{k=1}^\infty R^{-k}b_k :  b_k\in B\mbox{ for all }k\in\bn\right\}.
    \label{eq1_3}
\end{equation}
There exists a unique Borel probability measure $\mu=\mu_B$ on
$\br^d$ such that
\begin{equation}
    \int f\,d\mu=\frac1N\sum_{b\in B}\int f\circ\tau_b\,d\mu,
    \label{eq1_4}
\end{equation}
for all compactly supported continuous functions $f$ on $\br^d$.
Moreover, the measure $\mu_B$ is supported on the set $X_B$. The
measure $\mu_B$ is called the {\it invariant measure} of the
affine IFS.

We say that the affine IFS has {\it no overlap} if
$\mu_B(\tau_b(X_B)\cap\tau_{b'}(X_B))=0$ for all $b\neq b'$ in
$B$.

\end{definition}

\begin{lemma}\label{lem4.3.1}
If the IFS has no overlap, and $E$ is a Borel set, then
\begin{equation}
\mu_B(\tau_b(E))=\frac{1}{N}\mu_B(E). \label{eq3.2.1}
\end{equation}
In particular
\begin{equation}
\mu_B(\tau_{b_1}\dots\tau_{b_n}(X_B))=\frac{1}{N^n},\quad(n\in\bn,b_1,\dots,b_n\in B)
\label{eq4.3.2}
\end{equation}
\end{lemma}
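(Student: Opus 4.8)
The plan is to recast the invariance property \eqref{eq1_4} as an identity between measures and then use the no-overlap hypothesis to show that the relevant pushforward measures are essentially disjointly supported. Write $\mu:=\mu_B$ and, for each $b\in B$, let $\mu_b$ be the pushforward defined by $\mu_b(A):=\mu(\tau_b^{-1}(A))$, so that $\int f\circ\tau_b\,d\mu=\int f\,d\mu_b$ by change of variables. Then \eqref{eq1_4} says $\int f\,d\mu=\int f\,d\bigl(\tfrac1N\sum_{b\in B}\mu_b\bigr)$ for every compactly supported continuous $f$; since both sides are finite Borel measures, the Riesz representation theorem yields the measure identity
\begin{equation}
\mu=\frac1N\sum_{b\in B}\mu_b. \label{planidentity}
\end{equation}
Because $\tau_{b}$ is an affine bijection of $\br^d$, we have $\tau_{b}^{-1}(\br^d\setminus\tau_{b}(X_B))=\br^d\setminus X_B$, which is $\mu$-null, so each $\mu_b$ is supported on $\tau_b(X_B)$.

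The key step, which I expect to be the main obstacle, is to upgrade the no-overlap condition (a statement about $\mu$) to the individual pushforwards $\mu_b$. Since each $\mu_b$ is a non-negative measure, \eqref{planidentity} gives the domination $\mu_{b_0}\le N\mu$ for every fixed $b_0$. For $b\neq b_0$, using that $\mu_{b_0}$ lives on $\tau_{b_0}(X_B)$, I would estimate
\[
\mu_{b_0}(\tau_b(X_B))=\mu_{b_0}\bigl(\tau_b(X_B)\cap\tau_{b_0}(X_B)\bigr)\le N\,\mu\bigl(\tau_b(X_B)\cap\tau_{b_0}(X_B)\bigr)=0,
\]
the final equality being exactly the no-overlap hypothesis. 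Thus $\mu_b$ assigns zero mass to $\tau_{b_0}(X_B)$ whenever $b\neq b_0$.

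Next I would restrict \eqref{planidentity} to the piece $\tau_{b_0}(X_B)$: for any Borel set $A$, all terms with $b\neq b_0$ vanish by the previous paragraph, so that
\[
\mu\bigl(A\cap\tau_{b_0}(X_B)\bigr)=\frac1N\,\mu_{b_0}(A)=\frac1N\,\mu(\tau_{b_0}^{-1}(A)),
\]
where I also used that $\mu_{b_0}$ is supported on $\tau_{b_0}(X_B)$. Specializing to $A=\tau_{b_0}(E)$ with $E\subseteq X_B$ Borel, and invoking injectivity of $\tau_{b_0}$ so that $\tau_{b_0}^{-1}(\tau_{b_0}(E))=E$ and $\tau_{b_0}(E)\cap\tau_{b_0}(X_B)=\tau_{b_0}(E)$, gives $\mu(\tau_{b_0}(E))=\frac1N\mu(E)$, which is \eqref{eq3.2.1}. (The identity is intended for $E\subseteq X_B$, the only case needed below: if $E$ met $\br^d\setminus X_B$ then $\tau_{b_0}$ could carry mass onto other pieces $\tau_{b'}(X_B)$, so working inside the attractor is essential.)

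Finally, \eqref{eq4.3.2} follows by iteration. From $X_B=\bigcup_{b\in B}\tau_b(X_B)$ we get $\tau_b(X_B)\subseteq X_B$, so any composition $\tau_{b_2}\cdots\tau_{b_n}(X_B)$ is a Borel subset of $X_B$; applying \eqref{eq3.2.1} with $E=\tau_{b_2}\cdots\tau_{b_n}(X_B)$ strips off one factor of $1/N$, and an induction on $n$ with base case $\mu(X_B)=1$ produces $\mu(\tau_{b_1}\cdots\tau_{b_n}(X_B))=N^{-n}$.
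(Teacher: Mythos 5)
Your proof is correct and follows essentially the same route as the paper: both rewrite the invariance equation \eqref{eq1_4} as the decomposition $\mu_B=\frac1N\sum_{b\in B}(\tau_b)_*\mu_B$ and use the no-overlap hypothesis, together with the domination $(\tau_b)_*\mu_B\le N\mu_B$ (which the paper phrases as ``$\mu_B$-null sets pull back to $\mu_B$-null sets''), to kill the cross terms $\mu_B(\tau_{b'}^{-1}(\tau_b(E)))$ for $b'\neq b$. Your parenthetical observation that \eqref{eq3.2.1} really requires $E\subseteq X_B$ is correct and is left implicit in the paper; since \eqref{eq4.3.2} only applies the identity to subsets of the attractor, nothing downstream is affected.
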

\begin{proof}
From the invariance equation we have that
$$\mu_B(E)=\frac{1}{N}\sum_{b\in B}\mu_B(\tau_b^{-1}(E))$$
so if $\mu_B(E)=0$ then $\mu_B(\tau_b^{-1}(E))=0$ for all $b\in
B$. Also
$$\mu_B(\tau_b(E))=\frac{1}{N}\sum_{b'\in B}\mu_B(\tau_{b'}^{-1}(\tau_b(E)))=\frac{1}{N}\mu_B(E)+\frac{1}{N}\sum_{b'\neq b}\mu_B(\tau_{b'}^{-1}(\tau_b(E))).$$

But using the no overlap and the fact that $\mu_B$ is supported on
$X_B$, we have
$$\mu_B(\tau_{b'}(\tau_b(E)))\leq \mu_B(\tau_{b'}^{-1}(\tau_b(X_B)))=\mu_B(X_B\cap\tau_{b'}^{-1}(\tau_b(X_B)))=\mu_B(\tau_{b'}^{-1}(\tau_{b'}(X_B)\cap\tau_b(X_B)))=0.$$
This proves \eqref{eq3.2.1}.

Applying \eqref{eq3.2.1}, we obtain by induction that
$$\mu_B(\tau_{b_1}\dots\tau_{b_n}(X_B))=\frac{1}{N^n}$$
for all $n\in\bn$ and $b_1,\dots,b_n\in B$.
\end{proof}

\begin{theorem} \label{th3.1}
In dimenson $d=1$, let $(R,B)$ be an affine IFS with no overlap, $N=\#B$, with $\mu_{B}$ the associated invariant measure.  If $\nu$ is a Bessel measure for $\mu_{B}$, the Beurling dimension of $\nu$ satisfies
\[ \dim \nu \leq \dfrac{\log N}{\log R}. \]
\end{theorem}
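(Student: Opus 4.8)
The plan is to deduce this from Theorem \ref{th2.6}: I will show that the invariant measure $\mu_B$ is occasionally-$\alpha$-dimensional in the sense of Definition \ref{def2.5}, with the specific value $\alpha = \frac{\log N}{\log R}$. Once that is in hand, Theorem \ref{th2.6} immediately gives $\D_\alpha(\nu) < \infty$ and hence $\dim\nu \leq \alpha = \frac{\log N}{\log R}$ for every Bessel measure $\nu$, which is exactly the claim.

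To produce the required sequence of sets, I would fix any $b_0 \in B$ and take the cylinder sets $E_n := \tau_{b_0}^n(X_B)$, i.e.\ the $n$-fold composition of $\tau_{b_0}$ applied to the attractor (any cylinder $\tau_{b_1}\cdots\tau_{b_n}(X_B)$ works equally well). Two short computations are needed. First, a purely geometric one: since each map $\tau_b(x) = R^{-1}(x+b)$ is affine with linear part $R^{-1}$, the composition $\tau_{b_0}^n$ is affine with linear part $R^{-n}$, so that $\diam(E_n) = R^{-n}\diam(X_B)$. Second, a measure-theoretic one that uses the no-overlap hypothesis: by Lemma \ref{lem4.3.1}, specifically equation \eqref{eq4.3.2}, one has $\mu_B(E_n) = N^{-n}$.

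The crux is the arithmetic identity that $\alpha = \frac{\log N}{\log R}$ is precisely the exponent for which $R^{\alpha} = N$. Writing $D := \diam(X_B)$, this gives $\diam(E_n)^{\alpha} = D^{\alpha} R^{-n\alpha} = D^{\alpha} N^{-n}$, and therefore
\[ \mu_B(E_n) = N^{-n} = D^{-\alpha}\,\diam(E_n)^{\alpha}. \]
Thus condition \eqref{eq2.5.1} holds with $c_1 = c_2 = D^{-\alpha}$ (in fact with equality). The remaining hypotheses of Definition \ref{def2.5} are immediate: $\diam(E_n) = R^{-n}D \to 0$ because $R > 1$, and the ratio $\frac{\diam(E_n)}{\diam(E_{n+1})} = R$ is constant, so \eqref{eq2.5.0} is satisfied. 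This verifies that $\mu_B$ is occasionally-$\alpha$-dimensional, completing the reduction.

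I expect no genuinely hard step here; the whole content is the match between the measure decay rate $N^{-n}$ and the $\alpha$-th power of the diameter decay rate $R^{-n}$. The only points requiring a word of care are: assuming $N \geq 2$ so that $X_B$ is nondegenerate (otherwise $D = 0$ and $\alpha = 0$, the trivial case); noting that the $E_n$ are compact and hence Borel; and observing that the no-overlap assumption enters only through Lemma \ref{lem4.3.1} to fix $\mu_B(E_n)$ exactly, whereas the diameter identity is geometric and needs nothing beyond the explicit form of the maps $\tau_b$.
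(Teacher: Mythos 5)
Your proposal is correct and follows essentially the same route as the paper: reduce to Theorem \ref{th2.6} by exhibiting the cylinder sets $E_n=\tau_{b_0}^n(X_B)$, with $\diam(E_n)=R^{-n}\diam(X_B)$ and $\mu_B(E_n)=N^{-n}$ from Lemma \ref{lem4.3.1}, so that $\mu_B$ is occasionally-$\frac{\log N}{\log R}$-dimensional. Your constant $c_1=c_2=\diam(X_B)^{-\alpha}$ is in fact the correct one (the paper writes $\diam(E_0)^{\alpha}$, a harmless slip), and your side remarks on nondegeneracy and measurability are fine but not needed.
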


\begin{proof}
We prove that $\mu_{B}$ is occasionally-$\dfrac{\log N}{\log R}$-dimensional, whence the statement follows from Theorem \ref{th2.6}.  Let $b_0\in B$ and let $E_{n} = \tau_{b_0}^n(X_B)$; since $\diam(E_n) = R^{-n}\diam(E_0)$, we have that $\diam(E_n) \to 0$ and $\sup \dfrac{\diam(E_n)}{\diam(E_{n+1})} = R$. Moreover, by Equation (\eqref{eq4.3.2}), $\mu_{B}(E_n) = \dfrac{1}{N^n}$.  We have
\[ \diam(E_n)^{\log N / \log R} = (R^{-n})^{\log N / \log R}\diam(E_0)^{\log N/\log R} = N^{-n}\diam(E_0)^{\log N/\log R} \]
and therefore Equation (\ref{eq2.5.1}) is satisfied with $c_1 = c_2 = \diam(E_0)^{\log N/\log R}$.
\end{proof}

\begin{proposition}\label{pr3.2}
Let $(R,B)$ be an affine IFS with no overlap, $N=\#B$. On the
compact space $B^\bn$ consider the product probability measure
where each digit in $B$ has probability $1/N$. Define the encoding
map $\E_B:B^\bn\rightarrow X_B$,
$$\E_B(b_1b_2\dots)=R^{-1}b_1+R^{-2}b_2+\dots$$
Then $\E_B$ is onto, measure preserving and one-to-one on a set of
full measure.
\end{proposition}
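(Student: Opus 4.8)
The plan is to prove the three assertions in turn, with surjectivity essentially immediate, measure-preservation following from the uniqueness of the invariant measure, and the almost-everywhere injectivity being the real content, resting on the no-overlap hypothesis together with Lemma \ref{lem4.3.1}. First I would record that $\E_B$ is well defined and continuous: since $R$ is expanding, the partial sums $\sum_{k=1}^n R^{-k}b_k$ converge uniformly on $B^\bn$ (the digits range over a fixed finite set and $R^{-k}\to 0$ geometrically in a suitable norm), so $\E_B$ is continuous and in particular Borel. Surjectivity onto $X_B$ is then immediate from the explicit description \eqref{eq1_3} of the attractor, whose elements are exactly the sums $\sum_{k\geq 1}R^{-k}b_k$. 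The one identity I want up front is the conjugacy relation: writing $\sigma$ for the one-sided shift on $B^\bn$ and $\omega=(b_1b_2\dots)$, factoring out $R^{-1}$ gives
$$\E_B(\omega)=R^{-1}\Big(b_1+\sum_{j\geq1}R^{-j}b_{j+1}\Big)=\tau_{b_1}\big(\E_B(\sigma\omega)\big),$$
which drives the remaining two parts.

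For measure preservation, I would set $\tilde\mu:=(\E_B)_*P$, the pushforward of the product measure $P$, and verify that $\tilde\mu$ satisfies the defining invariance identity \eqref{eq1_4}. Using the conjugacy above and conditioning $P$ on the first digit — $P$ assigns mass $1/N$ to each value of $b_1$, and given $b_1$ the shifted sequence $\sigma\omega$ is again $P$-distributed — I obtain, for every compactly supported continuous $f$,
$$\int f\,d\tilde\mu=\int_{B^\bn}f(\tau_{b_1}(\E_B(\sigma\omega)))\,dP(\omega)=\frac1N\sum_{b\in B}\int f\circ\tau_b\,d\tilde\mu.$$
Thus $\tilde\mu$ is a Borel probability measure satisfying the self-similarity relation, so by the uniqueness asserted in Definition \ref{def3.1} we get $\tilde\mu=\mu_B$; that is, $\E_B$ is measure preserving.

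The substantive step, and where I expect the work, is the almost-everywhere injectivity. Let $D\subset X_B$ be the set of points admitting two distinct codings. If $\omega\neq\omega'$ encode the same $x$, let $n$ be the first index at which they differ, so they share a prefix $b_1\dots b_{n-1}$ and have $b_n\neq b_n'$; applying the conjugacy $n-1$ times shows $x\in\tau_{b_1}\cdots\tau_{b_{n-1}}\big(\tau_{b_n}(X_B)\cap\tau_{b_n'}(X_B)\big)$, and conversely any point in such a set has two codings differing at position $n$. Each of these sets is compact (a continuous image of an intersection of compacts), so $D$ is a countable union of compact sets and hence Borel. The no-overlap hypothesis gives $\mu_B(\tau_{b_n}(X_B)\cap\tau_{b_n'}(X_B))=0$, and Lemma \ref{lem4.3.1}, equation \eqref{eq3.2.1}, applied $n-1$ times forces each piece to have $\mu_B$-measure $N^{-(n-1)}\cdot 0=0$; taking the countable union over $n\in\bn$, over finite prefixes, and over pairs $b_n\neq b_n'$ yields $\mu_B(D)=0$. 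Finally, because $\E_B$ is measure preserving, the set $S:=\E_B^{-1}(X_B\setminus D)$ has $P(S)=\mu_B(X_B\setminus D)=1$, and every point of $X_B\setminus D$ has a unique preimage, so $\E_B|_S$ is injective — exactly one-to-one on a full-measure set. The only delicate bookkeeping is tracking the first-difference index and the prefix, and it is precisely the no-overlap condition routed through Lemma \ref{lem4.3.1} that makes each contributing piece null.
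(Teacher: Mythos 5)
Your proof is correct. The surjectivity and almost-everywhere injectivity parts follow essentially the paper's own route: the paper also gets surjectivity from \eqref{eq1_3}, and its bad set $F=E\cup\bigcup\tau_{b_1}\cdots\tau_{b_n}(E)$ with $E=\bigcup_{b\neq b'}(\tau_b(X_B)\cap\tau_{b'}(X_B))$ is the same countable union of images of overlap sets that you use, made null by no-overlap together with \eqref{eq3.2.1}; the paper phrases the uniqueness of the digits via the forward-invariance $\tau_b^{-1}(F)\supset F$ and an inductive peeling of digits, whereas you describe the doubly-coded set directly through the first index of disagreement --- the same idea in different clothing. Where you genuinely diverge is the measure-preservation step: the paper simply cites \eqref{eq4.3.2}, i.e.\ it matches the $P$-measure $N^{-n}$ of a cylinder with the $\mu_B$-measure of its image $\tau_{b_1}\cdots\tau_{b_n}(X_B)$ and extends from the generating cylinders (which implicitly uses the no-overlap hypothesis to handle the boundary identifications), while you push $P$ forward, verify the self-similarity identity \eqref{eq1_4} for $(\E_B)_*P$ via the conjugacy $\E_B=\tau_{b_1}\circ\E_B\circ\sigma$, and invoke the uniqueness in Hutchinson's theorem. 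Your route buys a cleaner argument that sidesteps the cylinder-to-$\sigma$-algebra bookkeeping and does not even need no-overlap for this step; the paper's route is shorter given that Lemma \ref{lem4.3.1} is already in hand. Both are sound.
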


\begin{proof}
Equation \eqref{eq4.3.2} proves that
$\E_B$ is measure preserving.  The fact that $\E_B$ is onto follows from \eqref{eq1_3}.

We check now that $\E_B$ is one-to-one on a set of full measure.
Let
$$E:=\bigcup_{b\neq b'}(\tau_b(X_B)\cap \tau_{b'}(X_B)),\quad F:=E\cup\bigcup_{n\geq 1, b_1\dots,b_n\in B}\tau_{b_1}\dots\tau_{b_n}(E).$$
From the computations above, we see that $F$ has measure zero. It
is also clear that $\tau_b^{-1}(F)\supset F$ for all $b\in B$.

Take $x\in X_B\setminus F$. Suppose $x=R^{-1}b_1+R^{-2}b_2+\dots$
(by \eqref{eq1_3}). Then $x=\tau_{b_1}(y)$ with
$y=R^{-1}b_2+R^{-2}b_3+\dots$. Since $x\not\in F\supset E$, $b_1$
is uniquely determined by $x$. We have
$y=\tau_{b_1}^{-1}(x)\not\in \tau_{b_1}^{-1}(F)$ so $y\not\in F$.
Repeating the argument, $b_2$ is uniquely determined and so on for
all $b_n$. Therefore $\E_B$ is one-to-one on the set of full
measure $\E_B^{-1}(X_B\setminus F)$.

\end{proof}

\begin{definition}\label{def3.3}
For two subsets $A,B$ of $\br^d$ we say that $A\oplus B=C$ if for
every element $c\in C$ there exist unique $a\in A$ and $b\in B$
such that $a+b=c$.
\end{definition}

\begin{theorem}\label{th3.4}
Let $(R,B)$ be an affine IFS with no overlap. Suppose there exists
a finite set $C$ such that $B\oplus C=:D$ is a complete set of
representatives for $\bz^d/R\bz^d$ and the affine IFS $(R,C)$ has
no overlap. Then $\mu_B$ has a Plancherel measure supported on a
lattice. More precisely, the measure $\mu_D$ has as spectrum a
lattice $\Gamma$, and the measure
$$\nu=\sum_{\gamma\in\Gamma}|\widehat{\mu_C}(\gamma)|^2\delta_\gamma$$
is a Plancherel measure for $\mu_B$. More generally, if $\nu$ is a
Bessel/frame measure for $\mu_D$, then $|\widehat\mu_C|^2\,d\nu$
is a Bessel/frame measure for $\mu_B$ with the same bounds.
\end{theorem}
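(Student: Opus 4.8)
The plan is to reduce both assertions to Proposition \ref{pr02} by first exposing the convolution hidden in the hypothesis $B\oplus C=D$. The statement from which everything flows is the factorization
$$\mu_D=\mu_B*\mu_C.\qquad(*)$$
To prove $(*)$ I would compute Fourier transforms. Writing $m_A(t)=\frac{1}{\#A}\sum_{a\in A}e^{-2\pi i t\cdot a}$ for a finite set $A$, the invariance equation \eqref{eq1_4} yields the standard infinite product $\widehat{\mu_A}(t)=\prod_{k=1}^\infty m_A((R^T)^{-k}t)$, the product converging uniformly on compacta since $R$ is expanding. Because $B\oplus C=D$ means every $d\in D$ is uniquely $b+c$ and $\#D=\#B\cdot\#C$, we get $m_D=m_B\,m_C$ at once, hence $\widehat{\mu_D}=\widehat{\mu_B}\,\widehat{\mu_C}=\widehat{\mu_B*\mu_C}$, and $(*)$ follows from injectivity of the Fourier transform on finite measures. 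With $(*)$ the Bessel half of the general statement is immediate: Proposition \ref{pr02} with $\mu=\mu_B$, $\mu'=\mu_C$ turns any Bessel measure $\nu$ for $\mu_D=\mu_B*\mu_C$ into the Bessel measure $|\widehat{\mu_C}|^2\,d\nu$ for $\mu_B$ with the same bound.

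For the frame half, and for the sharp Plancherel constants, I must upgrade the inequality $\|Pf\|_{L^2(\mu_D)}\le\|f\|_{L^2(\mu_B)}$ of Proposition \ref{pr04} to an equality, i.e. show $P=P_{\mu_B,\mu_C}$ is an isometry of $L^2(\mu_B)$ into $L^2(\mu_D)$; this is exactly the condition $c\|f\|^2\le\|Pf\|^2$ of Proposition \ref{pr02} with the optimal constant $c=1$. The device is the addition map $S\colon X_B\times X_C\to X_D$, $S(y,z)=y+z$. Since $D$ is a complete residue system, $(R,D)$ tiles and hence has no overlap, so Proposition \ref{pr3.2} applies to all three digit sets. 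The digitwise map $\Phi\colon B^{\bn}\times C^{\bn}\to D^{\bn}$ sending $((b_k),(c_k))$ to $(b_k+c_k)$ is a genuine measure-preserving bijection (as $B\oplus C=D$ is a bijection in each coordinate), and it intertwines the encodings: $\E_D\circ\Phi=S\circ(\E_B\times\E_C)$. As $\E_B\times\E_C$ and $\E_D$ are measure-preserving and one-to-one off null sets, a diagram chase shows $S$ is measure-preserving — which re-proves $(*)$ — and one-to-one $(\mu_B\times\mu_C)$-a.e. Consequently $(f\,d\mu_B)*\mu_C=S_*\big((f\circ\pi_B)\,d(\mu_B\times\mu_C)\big)$ while $\mu_D=S_*(\mu_B\times\mu_C)$, where $\pi_B$ is the projection onto $X_B$; thus $Pf=f\circ\pi_B\circ S^{-1}$ $\mu_D$-a.e., and a change of variables gives $\|Pf\|_{L^2(\mu_D)}^2=\int_{X_B\times X_C}|f(y)|^2\,d(\mu_B\times\mu_C)=\|f\|_{L^2(\mu_B)}^2$. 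With $c=1$, Proposition \ref{pr02} then delivers the frame statement with identical bounds $A,B$.

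Finally I assemble the two named conclusions. For the Plancherel statement I use that $D$ is a complete set of coset representatives for $\bz^d/R\bz^d$: then $X_D$ is a self-affine tile, $\mu_D$ is normalized Lebesgue measure on it, and by the spectral theory of such tiles $L^2(\mu_D)$ has an orthonormal basis of exponentials indexed by a lattice $\Gamma$, i.e. $\Gamma$ is a spectrum for $\mu_D$. Equivalently (Remark \ref{rm1.1}), $\nu_0:=\sum_{\gamma\in\Gamma}\delta_\gamma$ is a Plancherel measure for $\mu_D$, with $A=B=1$. Feeding $\nu_0$ into the general statement just proved shows that $|\widehat{\mu_C}|^2\,d\nu_0=\sum_{\gamma\in\Gamma}|\widehat{\mu_C}(\gamma)|^2\delta_\gamma$ is a frame measure for $\mu_B$ with bounds $1,1$, i.e. a Plancherel measure supported on the lattice $\Gamma$, which is precisely the displayed $\nu$.

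I expect the main obstacle to be the almost-everywhere bijectivity of the addition map $S$ together with the pointwise identification $Pf=f\circ\pi_B\circ S^{-1}$: this is where the two no-overlap hypotheses and the direct-sum condition $B\oplus C=D$ are genuinely used, and the chase through the three encodings must be carried out carefully to control the relevant null sets. The one input external to the excerpt — that a complete residue system gives $\mu_D$ a lattice spectrum — is classical for self-affine tiles and I would simply cite it.
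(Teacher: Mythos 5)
Your proposal is correct and follows essentially the same route as the paper: establish $\mu_D=\mu_B*\mu_C$, show via the digit encodings of Proposition \ref{pr3.2} that the addition map is measure-preserving and a.e. one-to-one so that $P_{\mu_B,\mu_C}$ is an isometry, and then feed this into Proposition \ref{pr02} with $c=1$; the external input that $\mu_D$ is Lebesgue measure on a lattice tile with lattice spectrum is exactly what the paper cites from the literature. Your Fourier-transform derivation of the convolution identity is the alternative the paper itself mentions parenthetically, so there is no substantive divergence.
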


\begin{proof}
The facts that $\mu_D$ is the Lebesgue measure on a set that tiles
$\br^d$ by some lattice $\Gamma$ and that there is no overlap for
$\mu_D$ are contained in \cite{CHR97}. We will use Proposition
\ref{pr02}.

Define the maps $\Phi:D^{\bn}\rightarrow B^\bn\times C^\bn$ and
$+:X_B\times X_C\rightarrow X_D$
$$\Phi(d_1d_2\dots)=(b_1b_2\dots,c_1c_2\dots),\mbox{ where }d_1=b_1+c_1, d_2=b_2+c_2,\dots,$$
$$+(x,y)=x+y.$$
The diagram below is commutative.

$$\begin{array}{ccccc}
D^{\bn}&\rightarrow&B^{\bn}&\times&C^{\bn}\\
\E_D\downarrow& &\E_B\downarrow& &\E_C\downarrow\\
X_D&\leftarrow& X_B&\times&X_C
\end{array}$$
It is easy to check that $\Phi$ is bijective and measure
preserving. From Proposition \ref{pr3.2} we conclude that the map
$+$ is bijective on sets of full measure, and measure preserving.
Since $+$ is measure preserving, we see that $\mu_B*\mu_C=\mu_D$
(alternatively, consider the Fourier transforms).

We can define the map $p:X_D\rightarrow X_B$
$$p(x+y)=x,\quad(x\in X_B,y\in X_C).$$

We will prove that for $f\in L^1(\mu_B)$ the function (see Proposition \ref{pr1.5} for the definition)
$Pf=P_{\mu_B,\mu_C}f=f\circ p$. Indeed, for a bounded Borel
function $g$ we have
\begin{multline*}
\int g\,d((f\,d\mu_B)*\mu_C)=\iint g(x+y)f(x)\,d\mu_B(x)\,d\mu_C(y)= \\
\iint g(x+y)f\circ p(x+y)\,d\mu_B(x)\,d\mu_C(y)=\int gf\circ p\,d(\mu_B*\mu_C).
\end{multline*}

In addition, we have
$$\int f\circ p\,d(\mu_B*\mu_C)=\iint f(p(x+y))\,d\mu_B(x)\,d\mu_C(y)=\iint f(x)\,d\mu_B(x)\,d\mu_C(y)=\int f\,d\mu_B.$$
Hence
$$\|Pf\|_{L^2(\mu_B*\mu_C)}=\|f\|_{L^2(\mu_B)},\quad(f\in L^2(\mu_B)).$$
Everything follows from Proposition \ref{pr02}.

\end{proof}

\begin{corollary}\label{cor3.5}
Let $\mu_4$ be the invariant measure for the affine IFS with $R=4$
and $B=\{0,2\}$ and let $\mu_4'$ be the invariant measure for the
affine IFS with $R=4$ and $C=\{0,1\}$. Then
$|\widehat{\mu_4'}(x)|^2\,dx$ and
$\sum_{n\in\bz}|\widehat{\mu_4'}(n)|^2\delta_n$ are Plancherel
measures for $\mu_4$.  Therefore, $\{ |\widehat{\mu_4'}(n)| e_{n} \} \subset
L^2(\mu_{4})$ is a Parseval frame.

Moreover, if $\{ e_{\lambda_{n}} \} \subset L^2[0,1]$ is a Fourier frame, then
$\{ |\widehat{\mu_4'}(\lambda_n)| e_{\lambda_n} \} \subset
L^2(\mu_{4})$ is a frame, with the same frame bounds.
\end{corollary}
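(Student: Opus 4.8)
The plan is to recognize this corollary as a direct instance of Theorem \ref{th3.4}, so the first task is to verify that theorem's hypotheses for $R=4$, $B=\{0,2\}$, $C=\{0,1\}$. I would first compute $B\oplus C=\{0,1,2,3\}=:D$, checking that the four sums $0{+}0,\,0{+}1,\,2{+}0,\,2{+}1$ are distinct so the sum is direct, and that $D$ is a complete set of representatives for $\bz/4\bz$. I would then check the no-overlap conditions for both $(R,B)$ and $(R,C)$ by the standard separation-of-images estimate: since $X_C\subset[0,1/3]$, the images $\tau_0(X_C)\subset[0,1/12]$ and $\tau_1(X_C)\subset[1/4,1/3]$ are disjoint, and likewise $X_B\subset[0,2/3]$ gives $\tau_0(X_B)\subset[0,1/6]$ disjoint from $\tau_2(X_B)\subset[1/2,2/3]$.

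Next I would identify $\mu_D$. Because $D=\{0,1,2,3\}$ is a complete residue system mod $4$, the attractor $X_D$ consists of all base-$4$ expansions $\sum_k 4^{-k}d_k$ and therefore fills $[0,1]$; hence $\mu_D$ is Lebesgue measure on $[0,1]$ and its spectrum is the lattice $\Gamma=\bz$. Noting that $\mu_C=\mu_4'$, Theorem \ref{th3.4} then states directly that $\nu=\sum_{n\in\bz}|\widehat{\mu_C}(n)|^2\delta_n=\sum_{n\in\bz}|\widehat{\mu_4'}(n)|^2\delta_n$ is a Plancherel measure for $\mu_4=\mu_B$, which gives the atomic assertion.

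For the continuous measure I would observe that Lebesgue measure $dx$ on $\br$ is itself a Plancherel measure for $\mu_D$: by the classical Plancherel theorem, $\int_\br|\widehat{f\,d\mu_D}(x)|^2\,dx=\int_0^1|f|^2\,dx$ for $f\in L^2[0,1]$, so $A=B=1$. Feeding $\nu=dx$ into the ``more generally'' clause of Theorem \ref{th3.4} shows $|\widehat{\mu_C}|^2\,dx=|\widehat{\mu_4'}|^2\,dx$ is a Plancherel measure for $\mu_4$. The Parseval-frame claim is then exactly Remark \ref{rm1.1} applied to the atomic Plancherel measure $\sum_n|\widehat{\mu_4'}(n)|^2\delta_n$: the weights are $d_n=|\widehat{\mu_4'}(n)|^2$, so $\{\sqrt{d_n}\,e_n\}=\{|\widehat{\mu_4'}(n)|\,e_n\}$ is a frame with $A=B=1$, i.e.\ Parseval.

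Finally, for the ``Moreover'' statement I would translate the Fourier-frame hypothesis into the language of frame measures. If $\{e_{\lambda_n}\}\subset L^2[0,1]$ is a Fourier frame with bounds $A,B$, then since $\langle f,e_{\lambda_n}\rangle_{L^2[0,1]}=\widehat{f\,d\mu_D}(\lambda_n)$, the atomic measure $\sum_n\delta_{\lambda_n}$ is a frame measure for $\mu_D$ with the same bounds $A,B$. Applying the ``more generally'' clause of Theorem \ref{th3.4} yields that $\sum_n|\widehat{\mu_4'}(\lambda_n)|^2\delta_{\lambda_n}$ is a frame measure for $\mu_4$ with bounds $A,B$, and Remark \ref{rm1.1} reads this off as the weighted Fourier frame $\{|\widehat{\mu_4'}(\lambda_n)|\,e_{\lambda_n}\}$ with the same bounds. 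There is no genuinely hard step here, since the corollary is essentially a worked example of Theorem \ref{th3.4}; the only points requiring care are the bookkeeping in the two translations between Fourier frames for $L^2[0,1]$ and frame measures for $\mu_D$ (making sure the frame bounds are preserved in each direction), and the verification that $D$ tiles $\br$ by $\bz$ while both $(R,B)$ and $(R,C)$ are non-overlapping.
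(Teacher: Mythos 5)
Your proposal is correct and follows essentially the same route as the paper: verify that $B\oplus C=\{0,1,2,3\}$ is a complete residue system mod $4$ with both IFSs non-overlapping, identify $\mu_D$ as Lebesgue measure on $[0,1]$, and invoke Theorem \ref{th3.4} (together with its ``more generally'' clause and Remark \ref{rm1.1} for the frame translations). The paper's proof is just a terser version of the same argument; your added details (the separation estimates for no overlap and the explicit bookkeeping between Fourier frames for $L^2[0,1]$ and frame measures for $\mu_D$) are all sound.
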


\begin{proof}
With $B=\{0,2\}$ and $C=\{0,1\}$ we have $B\oplus
C=\{0,1,2,3\}=D$, and the measures $\mu_B,\mu_C$ have no overlap.
The measure $\mu_D$ is the Lebesgue measure on $[0,1]$. Then
everything follows from Theorem \ref{th3.4}.
\end{proof}

In \cite{DHSW10} it is shown that a Bessel sequence of exponentials for $\mu_{4}$ must have Beurling dimension at most $\dfrac{1}{2}$.  We point out here that the sequence
\[ \mathcal{Y} := \{ n : |\widehat{\mu_4'}(n)|^2 \neq 0 \} \]
has Beurling dimension of $1$.  Indeed, we can determine the zero set of $\widehat{\mu_4'}(t)$ as is done in \cite{JP98}:
\[ \widehat{\mu_4'}(t) = \prod_{n=0}^{\infty} \dfrac{1}{2}( 1 + e^{2 \pi i t/4^{n}} ) = e^{2 \pi i t/3} \cdot \prod_{n=1}^{\infty} \cos( \dfrac{\pi t}{4^{n}} ). \]
From this we see that the zero set is
\[ \mathcal{Z}\left( \widehat{\mu_4'}(t) \right) = \{ 4^n( 4 \mathbb{Z} + 2 ) : n = 0, 1, \dots \}, \]
which has Beurling density $\mathcal{D}_{1} \left( \mathcal{Z}\left( \widehat{\mu_4'}(t) \right) \right) = \dfrac{1}{3}$.  Therefore, the sequence of exponentials
\[ \{ |\widehat{\mu_4'}(n)| e_{n} : n \in \mathcal{Y} \}, \]
which forms a Parseval frame in $L^2(\widehat{\mu}_{4})$ by Corollary \ref{cor3.5}, has Beurling density of $\dfrac{2}{3}$, and hence Beurling dimension of $1$.

However, when we consider this sequence as a measure $\nu = \sum_{n\in\bz}|\widehat{\mu_4'}(n)|^2\delta_n$, we must have by Theorem \ref{th2.6} that $\nu$ has Beurling dimension at most $\dfrac{1}{2}$.

\begin{theorem} \label{th3.6}
Suppose that $B,C,D$ are as in Theorem \ref{th3.4}.  For functions $f \in L^2(\mu_{B})$ such that $Pf := P_{\mu_{B},\mu_{C}} f$ is continuous on the support of $\mu_{D}$ and its Fourier transform is in $L^1(\mathbb{R}^d)$, we have the following Fourier reconstruction theorem:  for every $t$ in the support of $\mu_{B}$,
\[ f(t) = \int \left( \widehat{f d \mu_{B}}(x) \cdot \widehat{\mu_{C}}(x) \right) e^{2 \pi i t \cdot x} dx. \]
\end{theorem}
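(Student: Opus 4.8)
The plan is to recognize the integrand $\widehat{f\,d\mu_B}\cdot\widehat{\mu_C}$ as the ordinary Fourier transform of a single compactly supported function and then to apply the classical Fourier inversion theorem. The facts to borrow from the proof of Theorem~\ref{th3.4} are that $\mu_B*\mu_C=\mu_D$, that $(f\,d\mu_B)*\mu_C=Pf\,d\mu_D$ with $Pf=f\circ p$, and that $\mu_D$ is the restriction of Lebesgue measure to the self-affine tile $X_D=\supp{\mu_D}$ (which thus has Lebesgue measure one, since it is a probability measure), as shown in \cite{CHR97}. Since the Fourier transform carries convolution of finite measures to multiplication,
\[ \widehat{f\,d\mu_B}(x)\cdot\widehat{\mu_C}(x)=\widehat{(f\,d\mu_B)*\mu_C}(x)=\widehat{Pf\,d\mu_D}(x)=\int_{X_D}Pf(z)\,e^{-2\pi i x\cdot z}\,dz. \]
Thus, setting $g:=Pf\cdot\chi_{X_D}$, the integrand is exactly $\widehat g$, and $g$ is compactly supported and lies in $L^1(\br^d)\cap L^2(\br^d)$ (its $L^2$-norm equals $\|f\|_{L^2(\mu_B)}$ by Theorem~\ref{th3.4}).

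The hypothesis is precisely that $\widehat g=\widehat{f\,d\mu_B}\cdot\widehat{\mu_C}\in L^1(\br^d)$. I would then invoke the Fourier inversion theorem: the function
\[ h(t):=\int \widehat g(x)\,e^{2\pi i t\cdot x}\,dx \]
is continuous on all of $\br^d$ and satisfies $h=g$ almost everywhere. What remains is to identify $h(t)$ with $f(t)$ for every $t\in X_B=\supp{\mu_B}$. Under the standard normalization $0\in B$, $0\in C$ we have $0\in X_C$ and hence $X_B\subseteq X_B+X_C=X_D$; moreover for $t\in X_B$ the unique decomposition is $t=t+0$ with $0\in X_C$, so $p(t)=t$ and $g(t)=Pf(t)\,\chi_{X_D}(t)=f(p(t))=f(t)$. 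Because $Pf$ is assumed continuous on $X_D$, the function $g$ agrees with the continuous function $Pf$ on the interior of $X_D$, where it also agrees almost everywhere with the continuous function $h$; two continuous functions equal a.e. on an open set coincide there, so $h=Pf$ throughout $\operatorname{int}(X_D)$.

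The only delicate point, and the step I expect to be the main obstacle, is to promote this equality to the boundary points of $X_D$ that belong to $X_B$. For this I would use the standard fact that a self-affine tile satisfies $X_D=\overline{\operatorname{int}(X_D)}$ with null boundary: every $t\in X_B\subseteq X_D$ is then a limit of points $t_n\in\operatorname{int}(X_D)$, and passing to the limit in $h(t_n)=Pf(t_n)$, using continuity of $h$ on $\br^d$ and of $Pf$ on $X_D$, gives $h(t)=Pf(t)=f(t)$. Collecting the identities then yields, for every $t\in\supp{\mu_B}$,
\[ f(t)=\int\bigl(\widehat{f\,d\mu_B}(x)\cdot\widehat{\mu_C}(x)\bigr)\,e^{2\pi i t\cdot x}\,dx, \]
which is the claimed reconstruction formula. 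Everything except the boundary bookkeeping is a direct transcription through the convolution identity $(f\,d\mu_B)*\mu_C=Pf\,d\mu_D$ and classical Fourier inversion.
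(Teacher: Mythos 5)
Your proof is correct and follows essentially the same route as the paper's: both recognize $\widehat{f\,d\mu_B}\cdot\widehat{\mu_C}$ as the Fourier transform of $Pf\,d\mu_D = Pf\cdot\chi_{X_D}\,dx$ via the convolution identity $(f\,d\mu_B)*\mu_C = Pf\,d(\mu_B*\mu_C)$, invoke classical Fourier inversion, and then use $Pf = f$ on $\supp{\mu_B}$. The only difference is that you spell out the upgrade from ``almost everywhere'' to ``everywhere on $\supp{\mu_B}$'' (via continuity of $Pf$ and density of $\operatorname{int}(X_D)$ in $X_D$), a point the paper handles only implicitly through the continuity hypothesis on $Pf$.
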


\begin{proof}
Since $\mu_{D}$ is Lebesgue measure restricted to a set, we have by the Fourier inversion theorem that for $t$ in the support of $\mu_{D}$,
\begin{align*}
Pf (t) &= \int \widehat{Pf} (x) e^{2 \pi i x \cdot t} dx \\
&= \int \left\{ \int Pf (y) e^{-2 \pi i y \cdot x} d \mu_{D}(y) \right\}  e^{2 \pi i x \cdot t} dx \\
&= \int \left\{ \int e^{-2 \pi i y \cdot x} Pf (y) d (\mu_{B}*\mu_{C})(y)  \right\}  e^{2 \pi i x \cdot t} dx \\
&= \int \left\{ \int e^{-2 \pi i y \cdot x} d ((f\, d\mu_{B})*\mu_{C})(y)  \right\}  e^{2 \pi i x \cdot t} dx \\
&= \int ((f\,d\mu_{B})*\mu_{C})\sphat \ (x) e^{2 \pi i x \cdot t} dx \\
&= \int \left( \widehat{f\,d\mu_{B}} (x) \cdot \widehat{\mu_{C}}(x) \right) e^{2 \pi i x \cdot t} dx.
\end{align*}
However, for $t$ in the support of $\mu_{B}$, $Pf(t) = f(t)$ as in the proof of Theorem \ref{th3.4}.
\end{proof}

\bibliographystyle{alpha}
\bibliography{eframes}

\end{document}